\newcommand{\cmark}{\ding{51}}
\newcommand{\xmark}{\ding{55}}
\newcommand{\rdeg}{\textup{rdeg}}
\DeclareMathOperator*{\argmax}{arg\,max}
\newcommand{\rev}[1]{{\color{black}#1}}
\newtheorem{theorem}{Theorem}[section]
\newtheorem{lemma}[theorem]{Lemma}
\newtheorem{corollary}[theorem]{Corollary}
\newtheorem{remark}{Remark}[section]
\begin{document}
	
	\title{Barycentric rational approximation for learning the index of a dynamical system from limited data}
	
	
	\author[$\ast$]{Davide Pradovera}
	\affil[$\ast$]{ Stockholm University, Albanov\"agen 28, 11419, Stockholm, Sweden.\authorcr
\email{davide.pradovera@math.su.se}, \orcid{0000-0003-0398-1580}}

\author[$\dag$]{Ion Victor Gosea}
\affil[$\dag$]{Max Planck Institute for Dynamics of Complex Technical Systems,
Sandtorstr. 1, 39106 Magdeburg, Germany.\authorcr
\email{gosea@mpi-magdeburg.mpg.de}, \orcid{0000-0003-3580-4116}}

\author[$\ddagger$]{Jan Heiland}
\affil[$\ddagger$]{Ilmenau University of Technology, Weimarer Str. 25, 98693 Ilmenau, Germany and Max Planck Institute for Dynamics of Complex Technical Systems,
Sandtorstr. 1, 39106 Magdeburg, Germany \authorcr
\email{jan.heiland@tu-ilmenau.de}, \orcid{0000-0003-0228-8522}}

\abstract{We consider the task of data-driven identification of dynamical systems, specifically for systems whose behavior at large frequencies is non-standard, as encoded by a non-trivial \emph{relative degree} of the transfer function or, alternatively, a non-trivial \emph{index} of a corresponding realization as a descriptor system. We develop novel surrogate modeling strategies that allow state-of-the-art rational approximation algorithms (e.g., AAA and vector fitting) to better handle data coming from such systems with non-trivial relative degree. Our contribution is twofold. On one hand, we describe a strategy to build rational surrogate models with \emph{prescribed relative degree}, with the objective of mirroring the high-frequency behavior of the high-fidelity problem, when known. The surrogate model's desired degree is achieved through constraints on its barycentric coefficients, rather than through ad-hoc modifications of the rational form. On the other hand, we present a degree-identification routine that allows one to estimate the \emph{unknown relative degree} of a system from low-frequency data. By identifying the degree of the system that generated the data, we can build a surrogate model that, in addition to matching the data well (at low frequencies), has enhanced extrapolation capabilities (at high frequencies). We showcase the effectiveness and robustness of the newly proposed method through a suite of numerical tests.}

\keywords{rational approximation, system identification, data-driven approach, AAA algorithm, barycentric form, descriptor systems, frequency domain, transfer function}

\novelty{In this work, state-of-the-art rational-approximation algorithms are
tailored to the approximation of classes of dynamical systems, whose behavior at
large frequencies is not proper, e.g., systems modeled as descriptor
systems through differential-algebraic equations. We develop strategies to enable the construction of surrogate models with given relative degree, without destroying the standard barycentric form. We also describe how to automatically detect the relative degree of a system from low-frequency measurements of its frequency response.}

\maketitle

\section{Introduction}\label{sec:intro}

The approximation of large-scale dynamical systems \cite{ACA05} is crucial for achieving various goals such as employing efficient simulation or devising robust, automatic control laws. Model order reduction (MOR) is a collection of techniques for reducing the computational complexity of mathematical models in numerical simulations. With the ever-developing technology in many engineering fields, more and more complex mathematical models need to be numerically simulated to get deeper insights into the physics of many applied problems. In this framework, MOR aims to construct cheap and fast, but also reliable and accurate surrogate models for the original complex mathematical problem (described by convoluted, coupled, or highly nonlinear dynamical laws). Historically, MOR methods were typically intrusive, as explicit access to the latter models was typically required to compute the reduced-order counterparts. However, in the last decade, increased effort has been allocated to the computation of surrogate models in a non-intrusive fashion, i.e., by using data-driven approaches, rather than projection-based ones.

Data-driven (non-intrusive) techniques represent a viable alternative to classical (intrusive) methods of MOR, which rely on explicit access to the large-scale model. Unlike intrusive methods, data-driven surrogate models do not require explicit knowledge of the problem structure or matrices. Instead, low-order models can be directly computed from data, such as snapshots of the system's state-space evolution, alongside the control inputs and, optionally, the observed outputs.

Frequency domain analysis methods are used to approximate and analyze the behavior of a dynamical system using its transfer function, a system-invariant quantity that does not depend on the system's states or variables, but only on the input-output map that encodes the ``true dimensionality'' of the problem. For finite-dimensional linear time-invariant systems, the transfer function is a rational function, whereas infinite-dimensional systems (e.g., delayed or integro-differential ones) lead to more complex irrational structures. By approximating the transfer function of a large-scale system, one can provide insight on the system's response to a specific range of frequencies actuated by the control input. The class of methods that are aiming at this can be thought of as rational approximation tools that provide various techniques for an accurate data fit. A standard way of achieving this is by means of \textit{interpolation}, although one must move beyond ill-conditioned classical methods involving polynomials. Instead, through \emph{rational interpolation}, especially when using the \emph{barycentric} form, one can recover both approximation quality and good conditioning \cite{berrut2004barycentric}.

One notable approach for rational interpolation is based on Loewner matrices, was originally presented in \cite{AA86} and will be referred to as the Antoulas-Anderson (AA) method. A rational interpolant in barycentric form is built by computing the null space of a Loewner matrix based on the available data. Another method is the vector fitting (VF) algorithm \cite{VF}, which uses a linearized least-squares fitting approach. This algorithm iteratively adjusts some parameters of the rational functions (namely, the support points, as defined below) to minimize the mismatch between the model and the actual data, resulting in an accurate recovery of the transfer function. On the other hand, the adaptive AA (AAA) algorithm \cite{NST18} combines elements of both approaches, blending interpolation and least-squares fitting. It aims at finding a rational approximant by iteratively adjusting the model based on greedily selected interpolation points and on a least-squares fit.

\subsection{The barycentric form}

What all the above methods have in common is the barycentric form used to express the rational approximation. It is an alternative to other representations of transfer functions, e.g., as the ratio of two polynomials or, as in the ``Heaviside'' pole-residue format, as a sum of simple fractions. The barycentric form is a numerically stable representation \cite{berrut2004barycentric} of the rational approximant to be computed. It represents a rational function as a ratio of two sums of fractions with identical singularities (or poles):
\begin{equation}\label{eq:baryfull}
r(s) = \sum_{k=0}^m \frac{n_k}{s - s_k} \Bigg/ \sum_{k=0}^m\frac{d_k}{s - s_k}.
\end{equation}
The poles of both numerator and denominators, namely, the set $\{s_k\}_{k=0}^m$, are referred to as \emph{support points}. The numerators are different, containing the so-called \emph{barycentric weights} $n_k$ and $d_k$. One of the most useful features of the barycentric form is that one can encode \emph{interpolation} properties in its structure, in a numerically stable way. Specifically, the value of $r$ at each support point $s_k$ equals $n_k/d_k$ by design. As such, enforcing an arbitrary value $f_k$ at any support point is as easy as setting the numerator barycentric coefficient $n_k$ as $n_k:=d_kf_k$. If this is done at all support points, one obtains the \emph{interpolatory} barycentric form
\begin{equation}\label{eq:bary}
r(s) = \sum_{k=0}^m \frac{w_k f_k}{s - s_k} \Bigg/ \sum_{k=0}^m\frac{w_k}{s - s_k},
\end{equation}
which attains $r(s_k)=f_k$ for all $k$. Note that, for historical reasons, we have switched to the symbol $w_k$ to denote the denominator barycentric weights.

It is well known that $r$ in either barycentric form \eqref{eq:baryfull} or \eqref{eq:bary} is a rational function of \emph{rational type} $(m,m)$, i.e., $r$ may be expressed as the ratio of two complex-valued polynomials, each of degree at most $m$. In order to obtain this alternative form, it suffices to multiply both numerator and denominator by the nodal degree-$(m+1)$ polynomial $\pi(s)=\prod_{k=0}^m(s-s_k)$. This leads to the ``rational'' (as opposed to ``barycentric'') form of $r$:
\begin{equation}\label{eq:baryfirst}
r(s) = \sum_{k=0}^mn_k L_k(s) \Bigg/ \sum_{k=0}^md_k L_k(s)\;\text{or}\; r(s) = \sum_{k=0}^mw_k f_k L_k(s) \Bigg/ \sum_{k=0}^mw_k L_k(s),
\end{equation}
for \eqref{eq:baryfull} and \eqref{eq:bary}, respectively. Above, $L_k(s)=\pi(s)/(s-s_k)$ is a degree-$m$ ``non-nor\-mal\-ized'' Lagrange polynomial. The rational form of $r$ is often avoided due to how unstable the evaluation of the polynomials $L_k$ generally is. On the other hand, the barycentric form is preferred due to its more beneficial numerical properties.

\subsection{Motivation: DAE index and relative degree}

In this work, we will show how, by imposing extra conditions on the barycentric weights appearing in \eqref{eq:baryfull} and \eqref{eq:bary}, one can achieve a prescribed rational type. To be more precise, we will focus on prescribing a certain \emph{(relative) degree} for the rational function, defined as the difference between the degrees of numerator and denominator in polynomial form.

This quantity is of interest, e.g., in the study of descriptor systems characterized by differential-algebraic equations (DAEs). Indeed, while a standard ODE system (without a \emph{feedthrough term}) has a transfer function with relative degree at most $-1$, the transfer function of a linear DAE can contain terms of higher relative degree, often referred to as \emph{polynomial} or \emph{improper part}, dominating the response at high frequencies. We refer the reader to \cite{petzold1982differential,KM2006differential} for more in-depth characterization, properties, and solutions of DAE systems. In the literature of DAEs, a concept closely related to relative degree is that of \emph{index}. Although various different concepts of index exist, we will casually use the term ``DAE index'' to denote the relative degree plus one. This is correct if one refers to the \emph{Kronecker} index, assuming it to be defined and that there is no realization of smaller index. Concerning the ultimate motivation for our work, we note that the knowledge of the DAE index of a system is fundamental for controller design and MOR \cite{BerIR23,morBenS17}.

In recent years, there have been attempts to estimate the DAE index from frequency response data. In \cite{morGosZA20,morAntGH20}, the coefficients of the polynomial part of the transfer function are estimated from samples at high frequency. Moreover, a method that is tailored for fitting DAEs of index $2$ (i.e., relative degree $1$) was recently proposed in \cite{GosH23}, extending the AA procedure. There, the standard barycentric form was modified, adding an extra weight to the barycentric numerator, as a way to account for the polynomial part of the transfer function.

However, all these methods are characterized by two main shortcomings: (i) the target relative degree must be set in advance, requiring an expert opinion on (an upper bound for) the DAE index, and (ii) they either require modifying the barycentric form or estimating the polynomial coefficients separately. Also, we note that these methods are tailored to positive relative degree, corresponding to the class of DAE systems.

In this paper, we explicitly address the aforementioned shortcomings and aim at developing a fully automatic method that uses the standard, unaltered barycentric form to estimate the relative degree of a system's transfer function given only limited data, without any prior information on the degree. Notably, such data is in the form of transfer function samples, which are \emph{not} assumed to be at high-frequency values. For the sake of versatility, we mainly follow the AAA algorithm, which chooses the interpolation points adaptively. In this sense, our contribution may also be viewed as an extension of the AAA algorithm, although our presentation is also relevant for other rational approximation techniques. Concretely, we discuss how to apply our method in combination with least-squares-based rational \emph{approximation}, which may achieve substantially higher robustness than \emph{interpolation}, e.g., when only noisy samples of the transfer function are available. The method proposed here can be applied to both positive and negative relative degrees, regardless of how large or small they are.

Our method relies on techniques for prescribing a given degree within rational-function-based model reduction of frequency-domain data. While the algebraic characterization of nonzero relative degree is not entirely new in rational approximation (see, e.g., \cite{filip_rational_2018}), one of our main contributions lies in translating these insights into a stable and practically viable algorithmic framework that operates robustly even for large relative degrees, which were previously numerically inaccessible. Notably, our framework achieves higher numerical robustness than prior works by incorporating novel stable procedures for rational extrapolation, which play a crucial role in reliably predicting a system's behavior outside the sampled frequency range.

The paper is structured as follows: Section~\ref{sec:RatApprox} reviews rational approximation, including AAA as an exemplifying framework. In Section~\ref{sec:RatDegree}, we rigorously derive algebraic conditions that relate a rational function's relative degree to its barycentric weights. Such theoretical results are first used in Section~\ref{sec:RatApproxConstrained} to devise an algorithm for building a rational approximant of prescribed degree, and then in Section~\ref{sec:DegreeAutomatic} to develop a procedure for data-driven identification of a dynamical system's relative degree. A robust least-squares extension of our method is presented in Section~\ref{sec:Extensions}. Our novel methods are then numerically tested in Section~\ref{sec:NumericalExamples} on a variety of numerical examples, including several MOR benchmarks.

\section{Rational approximation using the barycentric form}\label{sec:RatApprox}

In this section, we summarize established rational approximation methods that have also been used as model reduction tools throughout the years.

Among the above-mentioned methods, both AA and AAA are interpolatory, i.e., they use form \eqref{eq:bary}, with interpolation being enforced at all support points. As a consequence, the support points must be chosen as a subset of the sample points. The main difference between AA and AAA is that AA fixes the support points in advance, while AAA selects them adaptively through an iterative procedure.

On the other hand, VF is not interpolatory and thus makes use of the general barycentric form \eqref{eq:baryfull}. The support points are iteratively updated, through the so-called Sanathanan-Koerner (SK) iterations, with the objective of finding their ``optimal'' location. The number of support points stays constant and must be fixed in advance, as opposed to AAA, where it gradually increases.

In our discussion, we look at all the above methods as approaches for rational approximation (or ``surrogate modeling'') of a scalar function $f:\mathbb{C}\to\mathbb{C}$, representing, in our application, the transfer function of a large-scale, complex dynamical system. The to-be-approximated function $f$ is sampled at a set of $m'>0$ points $\{s_j'\}_{j=1}^{m'}$. These are typically located on the imaginary axis $\textrm{i}\mathbb{R}$ whenever the task involves the identification of dynamical systems from frequency-domain measurements.

Within each iteration of AAA and VF, an optimization problem is used to characterize the barycentric weights of the rational approximant. In its more general formulation, this problem is the same one that must be solved (only once) in AA as well. Specifically, one tries to find the barycentric weights that minimize the \emph{$\ell^2$ approximation error} at the sample points, namely,
\begin{equation*}
e:=\left(\sum_{j=1}^{m'}\left|f(s_j')-\sum_{k=0}^m\frac{n_k}{s_j'-s_k} \Bigg/ \sum_{k=0}^m\frac{d_k}{s_j'-s_k}\right|^2\right)^{1/2}.
\end{equation*}
Due to the nonlinearity of this target function (with respect to $d_k$), a linearized version of the problem is considered: multiplying by the surrogate denominator leads to
\begin{equation}\label{eq:LSVF}
\begin{aligned}
	\min_{n_0,\ldots,n_m,d_0,\ldots,d_m\in\mathbb{C}} & \sum_{j=1}^{m'}\lambda_j\left|\sum_{k=0}^m \frac{f(s_j')d_k-n_k}{s_j'-s_k}\right|^2,\\
	\textrm{such that} \quad & \sum_{k=0}^m|d_k|^2=1.
\end{aligned}
\end{equation}
Note the addition of a normalization constraint, to avoid the trivial solution $n_k=d_k=0$ for all $k$, as well as the presence of the weights $\lambda_j$, which are equal to 1 in AA and AAA, while they vary throughout the SK iterations in VF.

In the interpolatory approaches (AA and AAA), the imposition of interpolation conditions reduces the number of degrees of freedom, eliminating the numerator weights, cf.~\eqref{eq:bary}. The corresponding optimization problem reads
\begin{equation}\label{eq:LSAA}
\begin{aligned}
	\min_{w_0,\ldots,w_m\in\mathbb{C}} & \sum_{j=1}^{m'}\left|\sum_{k=0}^m \frac{f(s_j')-f_k}{s_j'-s_k}w_k\right|^2,\\
	\textrm{such that} \quad & \sum_{k=0}^m|w_k|^2=1.
\end{aligned}
\end{equation}
Both \eqref{eq:LSVF} and \eqref{eq:LSAA} can be solved by standard techniques from computational linear algebra, e.g., through singular value decomposition (SVD), cf.~Section~\ref{sec:RatApproxConstrained}.

As mentioned above, we will focus on the AAA algorithm in our upcoming presentation, although generalizations to other techniques like VF are possible, as described in Section~\ref{sec:Extensions}. Our choice is due to AAA's great effectiveness and flexibility. Indeed, the AAA algorithm, originally proposed in \cite{NST18}, has since been extended and developed in recent years, including applications to nonlinear eigenvalue problems \cite{lietaert2022automatic,guttel2022robust,pradovera22,guttel2022randomized}, MOR of parameterized linear dynamical systems \cite{nobile2021,CBG2023}, MOR of linear systems with quadratic outputs \cite{gosea2022data}, rational approximation of periodic functions \cite{baddoo2021aaatrig}, and rational approximation of matrix-valued functions \cite{elsworth2019conversions,GG20,morAumBGetal23,osti2005602}.

The characterizing feature of the AAA algorithm is its greedy choice of support points. These are incrementally selected through an iterative procedure, terminating as soon as the $\ell^\infty$ approximation error, namely, $\max_{j=1,\ldots,m'}|r(s_j')-f(s_j')|$, is below a user-defined tolerance $\varepsilon_{\text{AAA}}>0$. A pseudo-code formulation of the AAA algorithm is provided in a later section, namely, Algorithm~\ref{algo:aaa} with $\delta=0$.

\section{Relative degree of barycentric forms}\label{sec:RatDegree}
We investigate here how a rational function's degree can be found, based on the function's barycentric coefficients.

We begin by pointing out that the rational type $(m,m)$ of the rational function $r$ in \eqref{eq:baryfull} or \eqref{eq:bary} may not be ``exact'', i.e., the degrees of the numerator and denominator appearing in the rational form of $r$ may be smaller than $m$. Following this observation, we rigorously define the relative degree $\rdeg(r)$ of $r$ as the difference between the exact degrees of numerator and denominator of $r$ in the form \eqref{eq:baryfirst}. An equivalent definition involves an asymptotic analysis:
\begin{equation*}
r(s)=\mathcal{O}\left(s^{\rdeg(r)}\right)\quad\text{as }s\to\infty.
\end{equation*}

\begin{remark}\label{rem:degreesystem}
In systems theory, one defines the relative degree of a \emph{system} (as opposed to that of a rational function) by an abuse of notation, as the relative degree of its transfer function (see, e.g., \cite[Ch. 4.1]{Isi85}). Generalizations for nonlinear systems are also possible.
\end{remark}

We recall that one of our ultimate objectives is to compute rational functions of given relative degree, in order to approximate well the high-fidelity model's behavior for large frequencies. Accordingly, it is crucial to be able to relate the relative degree $\rdeg(r)$ to the support values $f_0,\ldots,f_m$ and to the barycentric weights $w_0,\ldots,w_m$. To this aim, we first derive a useful technical identity.

\begin{lemma}\label{lem:eqasympt}
Let $\{\alpha_k\}_{k=0}^m\cup\{s_k\}_{k=0}^m\subset\mathbb C$. If $|s|>\max_{k=0,\ldots,m}|s_k|$, we have the series representation
\begin{equation}\label{eq:eqasympt}
	\sum_{k=0}^m\frac{\alpha_k}{s - s_k}=\sum_{l=0}^\infty\left(\sum_{k=0}^m\alpha_ks_k^l\right)\frac1{s^{l+1}}.
\end{equation}
\end{lemma}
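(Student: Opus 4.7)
The plan is to prove the identity by expanding each individual term $\alpha_k/(s-s_k)$ as a geometric series in $1/s$, and then interchanging the finite sum over $k$ with the (absolutely convergent) infinite sum over $l$.

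First, I would fix $s$ with $|s|>\max_{k=0,\ldots,m}|s_k|$. For each index $k\in\{0,\ldots,m\}$, this gives $|s_k/s|<1$, so the geometric series formula yields
\begin{equation*}
    \frac{1}{s - s_k}=\frac1s\cdot\frac{1}{1-s_k/s}=\frac1s\sum_{l=0}^\infty\left(\frac{s_k}{s}\right)^l=\sum_{l=0}^\infty\frac{s_k^l}{s^{l+1}},
\end{equation*}
with absolute convergence.

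Next, I would multiply the identity above by $\alpha_k$ and sum over $k=0,\ldots,m$. Since this is a \emph{finite} sum of absolutely convergent series, the order of summation may be freely interchanged (no use of Fubini/Tonelli in any nontrivial form is needed; one simply rearranges finitely many series). Swapping the sums yields
\begin{equation*}
    \sum_{k=0}^m\frac{\alpha_k}{s - s_k}=\sum_{k=0}^m\alpha_k\sum_{l=0}^\infty\frac{s_k^l}{s^{l+1}}=\sum_{l=0}^\infty\frac{1}{s^{l+1}}\sum_{k=0}^m\alpha_k s_k^l,
\end{equation*}
which is precisely \eqref{eq:eqasympt}.

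There is essentially no hard step: the only item worth stating carefully is the convergence condition $|s|>|s_k|$ for every $k$, which is exactly what the hypothesis $|s|>\max_k|s_k|$ guarantees uniformly. Because the outer sum over $k$ has only finitely many terms, no subtle interchange-of-limits theorem is required, and the proof is a direct computation.
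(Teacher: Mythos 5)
Your proof is correct and follows essentially the same route as the paper: expand each term $\alpha_k/(s-s_k)$ as a geometric series in $s_k/s$ (valid since $|s_k/s|<1$) and interchange the finite sum over $k$ with the infinite sum over $l$. The only difference is that you spell out the justification for the interchange slightly more explicitly, which the paper leaves implicit.
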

\begin{proof}
The claim follows by the Laurent expansion of the geometric series:
\begin{equation*}
	\frac1s\sum_{k=0}^m\frac{\alpha_k}{1-s_k/s}=\frac1s\sum_{k=0}^m\alpha_k\sum_{l=0}^\infty\left(\frac{s_k}{s}\right)^l=\sum_{l=0}^\infty\left(\sum_{k=0}^m\alpha_ks_k^l\right)\frac1{s^{l+1}}.
\end{equation*}
The requirement that $|s|>|s_k|$ for all $k=0,\ldots,m$ is necessary for the (absolute) convergence of all $(m+1)$ geometric series.
\end{proof}

We are now ready to state our main results, starting from the following theorem. We mention that, during the writing of this work, we became aware of a similar 25-year-old result appearing in \cite{berrut1997barycentric}, which was also recently rediscovered in \cite{filip_rational_2018}.

\begin{theorem}\label{th:degree}
Let $\mu$ and $\nu$ be non-negative integers $\leq m$ such that
\begin{equation}\label{eq:numdegree}
	\begin{cases}
		\sum_{k=0}^mw_kf_ks_k^l=0\quad\text{for } l=0,1,\ldots,\mu-1,\\
		\sum_{k=0}^mw_kf_ks_k^\mu\neq 0,
	\end{cases}
\end{equation}
and
\begin{equation}\label{eq:dendegree}
	\begin{cases}
		\sum_{k=0}^mw_ks_k^l=0\quad\text{for } l=0,1,\ldots,\nu-1,\\
		\sum_{k=0}^mw_ks_k^\nu\neq 0.
	\end{cases}
\end{equation}
Then the rational function $r$ in \eqref{eq:bary} has exact type $(m-\mu,m-\nu)$, and its relative degree is $\rdeg(r)=\nu-\mu$.

Except for trivial cases ($w_kf_k=0$ for all $k$ or $w_k=0$ for all $k$), the converse also holds true: if the exact type of $r$ is $(m-\mu,m-\nu)$, then \eqref{eq:numdegree} and \eqref{eq:dendegree} hold.
\end{theorem}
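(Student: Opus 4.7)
The plan is to read off the relative degree of $r$ directly from the asymptotic behavior of its numerator $N(s):=\sum_{k=0}^m w_kf_k/(s-s_k)$ and denominator $D(s):=\sum_{k=0}^m w_k/(s-s_k)$ at infinity, using Lemma~\ref{lem:eqasympt} to translate that behavior into the algebraic conditions \eqref{eq:numdegree}--\eqref{eq:dendegree}.

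For the forward direction, I would apply Lemma~\ref{lem:eqasympt} with $\alpha_k=w_kf_k$: condition \eqref{eq:numdegree} makes the first $\mu$ coefficients of the Laurent series of $N$ at infinity vanish, while the next one equals $\sum_k w_kf_ks_k^\mu\neq 0$, so
\[
    N(s)=\frac{\sum_k w_kf_ks_k^\mu}{s^{\mu+1}}+\mathcal{O}(s^{-\mu-2})\quad\text{as }s\to\infty,
\]
and analogously $D(s)=(\sum_k w_ks_k^\nu)\,s^{-\nu-1}+\mathcal{O}(s^{-\nu-2})$. Since both leading coefficients are nonzero, dividing these expansions yields $r(s)\sim\bigl(\sum_k w_kf_ks_k^\mu/\sum_k w_ks_k^\nu\bigr)\,s^{\nu-\mu}$ as $s\to\infty$, and the asymptotic characterization of $\rdeg(r)$ recalled just above the theorem immediately gives $\rdeg(r)=\nu-\mu$. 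To pin down the exact \emph{type}, I would pass to the polynomial form \eqref{eq:baryfirst}: multiplying $N$ and $D$ by the nodal polynomial $\pi(s)=\prod_k(s-s_k)$ produces polynomials of degree at most $m$, and since $\pi(s)=s^{m+1}+\mathcal{O}(s^m)$, the numerator polynomial grows at infinity like $(\sum_k w_kf_ks_k^\mu)\,s^{m-\mu}$ and the denominator polynomial like $(\sum_k w_ks_k^\nu)\,s^{m-\nu}$, fixing their exact degrees at $m-\mu$ and $m-\nu$.

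The converse is a uniqueness-of-Laurent-expansion argument. Assuming $r$ has exact type $(m-\mu,m-\nu)$ and excluding the trivial cases, the polynomial $\pi\cdot N$ has exact degree $m-\mu$, so $N(s)$ behaves at infinity like a nonzero multiple of $s^{-\mu-1}$. Its Laurent expansion at infinity, being unique, must then have vanishing coefficients at $s^{-1},\ldots,s^{-\mu}$ and a nonzero coefficient at $s^{-\mu-1}$; matching these against the explicit formula provided by Lemma~\ref{lem:eqasympt} recovers \eqref{eq:numdegree}, and the corresponding argument for $D$ gives \eqref{eq:dendegree}. I do not anticipate any deep obstacle: the main bookkeeping point is that the leading coefficients stay nonzero when the quotient $N/D$ is formed, which is automatic since they are just nonzero complex scalars, and the exclusion of the trivial cases is precisely what guarantees that $\mu$ and $\nu$ exist at all in the converse.
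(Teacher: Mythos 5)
Your proposal is correct and follows essentially the same route as the paper: both apply Lemma~\ref{lem:eqasympt} to the numerator and denominator sums, combine the resulting Laurent expansions with $\pi(s)=s^{m+1}+\mathcal{O}(s^m)$ to identify the leading terms of the polynomials in \eqref{eq:baryfirst}, and prove the converse by observing that a polynomial of exact degree $m-\nu$ forces the first $\nu$ expansion coefficients to vanish and the next one to be nonzero. The only cosmetic difference is that you extract $\rdeg(r)=\nu-\mu$ from the quotient of the asymptotic expansions before pinning down the exact type, whereas the paper establishes the exact type first and reads the relative degree off as the difference of degrees.
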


\begin{proof}
We rely on the representation of $r$ in the rational form \eqref{eq:baryfirst}. For the sake of brevity, we will only prove results for the denominator
\begin{equation*}
	q(s)=\sum_{k=0}^mw_kL_k(s)=\pi(s)\sum_{k=0}^m\frac{w_k}{s - s_k}.
\end{equation*}
The equivalent results concerning the numerator $p(s)=\sum_{k=0}^mw_kf_kL_k(s)$ follow by the same arguments, by simply replacing any instance of ``$w_k$'' with ``$w_kf_k$''.

Consider
\begin{equation*}
	d(s)=\frac{q(s)}{\pi(s)}=\sum_{k=0}^m\frac{w_k}{s - s_k}.
\end{equation*}
By Lemma~\ref{lem:eqasympt},
\begin{equation*}
	d(s)=\sum_{l=0}^\infty\left(\sum_{k=0}^mw_ks_k^l\right)\frac1{s^{l+1}}
\end{equation*}
as $s\to\infty$. On the other hand, the nodal polynomial $\pi$ satisfies
\begin{equation*}
	\pi(s)=\prod_{k=0}^m(s-s_k)=s^{m+1}+\mathcal{O}\left(s^m\right).
\end{equation*}
Putting these two observations together leads to
\begin{equation}\label{eq:laurent}
	q(s)=\sum_{l=0}^\infty\left(\sum_{k=0}^mw_ks_k^l\right)\left(s^{m-l}+\mathcal{O}\left(s^{m-l-1}\right)\right).
\end{equation}

The claims of Theorem~\ref{th:degree} follow from here. Indeed, if \eqref{eq:dendegree} is true, then the first $\nu$ terms in \eqref{eq:laurent} disappear while the $(\nu+1)$-th remains:
\begin{equation*}
	q(s)=\left(\sum_{k=0}^mw_ks_k^\nu\right)\left(s^{m-\nu}+\mathcal{O}\left(s^{m-\nu-1}\right)\right)+\underbrace{\sum_{l=\nu+1}^\infty\left(\sum_{k=0}^mw_ks_k^l\right)\left(s^{m-l}+\mathcal{O}\left(s^{m-l-1}\right)\right)}_{=\mathcal{O}\left(s^{m-\nu-1}\right)},
\end{equation*}
and $q(s)$ has leading term $\left(\sum_{k=0}^mw_ks_k^\nu\right)s^{m-\nu}$, which is nonzero by \eqref{eq:dendegree}.

On the other hand, if a non-identically-zero $q$ has degree exactly $m-\nu$, then:
\begin{itemize}
	\item the first $\nu$ terms in \eqref{eq:laurent} must disappear, since they have order $>m-\nu$; as such, $\sum_{k=0}^mw_ks_k^l=0$ for $l=0,\ldots,\nu-1$;
	\item the $(\nu+1)$-th term in \eqref{eq:laurent} cannot disappear, since all the terms from the $(\nu+2)$-th onward have order $<m-\nu$; as such, $\sum_{k=0}^mw_ks_k^\nu\neq0$.
\end{itemize}
\end{proof}

Note that the hypotheses of Theorem~\ref{th:degree} can be equivalently stated in terms of (generalized) Vandermonde matrices, on which we will rely for building our surrogate model.
\begin{corollary}\label{th:vandermonde}
Let $\phi_0(s),\ldots,\phi_{\mu-1}(s)$ be any basis of $\mathbb{P}_{\mu-1}(\mathbb{C})$, the space of polynomials of degree $<\mu$. Define $F=\textup{diag}(f_0,\ldots,f_m)\in\mathbb{C}^{(m+1)\times(m+1)}$ and let
\begin{equation}\label{eq:vander}
	V_{\mu-1}=\begin{bmatrix}
		\phi_0(s_0) & \cdots & \phi_{\mu-1}(s_0) \\
		\vdots & \ddots & \vdots \\
		\phi_0(s_m) & \cdots & \phi_{\mu-1}(s_m) \\
	\end{bmatrix}\in\mathbb{C}^{(m+1)\times\mu}
\end{equation}
be the corresponding generalized Vandermonde matrix. If $\mathbf{w}=[w_0,\ldots,w_m]^\top$ is non\-zero and lies in the null space of $V_{\mu-1}^\top F$, then the exact degree of the numerator of $r$ in \eqref{eq:baryfirst} is at most $m-\mu$.

Moreover, replacing $\mu$ with $\nu$ and $F$ with the identity matrix in the previous statement leads to a similar degree condition on the denominator of $r$: if $\mathbf{w}$ is nonzero and lies in the null space of $V_{\mu-1}^\top$, then the exact degree of the denominator of $r$ in \eqref{eq:baryfirst} is at most $m-\nu$.
\end{corollary}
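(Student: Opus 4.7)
The plan is to reduce the corollary directly to Theorem~\ref{th:degree} by rephrasing the Vandermonde null-space hypothesis in terms of the vanishing moment conditions that appear in the first line of \eqref{eq:numdegree} and \eqref{eq:dendegree}. No new analytic content should be needed; the argument is essentially a change of basis in polynomial space followed by a quotation from the proof of Theorem~\ref{th:degree}.

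First, I would write the condition $V_{\mu-1}^\top F \mathbf{w} = \mathbf{0}$ entrywise, so that its $j$-th coordinate reads $\sum_{k=0}^m \phi_j(s_k) f_k w_k = 0$ for $j=0,1,\ldots,\mu-1$. Since $\{\phi_j\}_{j=0}^{\mu-1}$ is a basis of $\mathbb{P}_{\mu-1}(\mathbb{C})$, taking linear combinations of these $\mu$ identities yields $\sum_{k=0}^m p(s_k) f_k w_k = 0$ for every polynomial $p$ of degree at most $\mu-1$. Specializing to the monomials $p(s)=s^l$, $l=0,\ldots,\mu-1$, recovers precisely the first line of \eqref{eq:numdegree}, with the role of ``$w_k$'' played by $w_k f_k$. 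The forward half of the proof of Theorem~\ref{th:degree}, applied to the numerator $\sum_{k=0}^m w_k f_k L_k(s)$, then shows via the Laurent expansion \eqref{eq:laurent} that this numerator is $\mathcal{O}(s^{m-\mu})$ at infinity, hence its exact degree is at most $m-\mu$. The nonzero condition on the $\mu$-th moment is not assumed here, so one obtains only the upper bound rather than an exact value, which is all the corollary claims.

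The denominator statement is obtained by repeating the same argument verbatim with $F$ replaced by the identity matrix and $\mu$ by $\nu$: the null-space condition translates directly into $\sum_{k=0}^m s_k^l w_k = 0$ for $l=0,\ldots,\nu-1$, matching the first line of \eqref{eq:dendegree}, and the same Laurent-expansion computation from the proof of Theorem~\ref{th:degree} gives degree at most $m-\nu$ for $\sum_{k=0}^m w_k L_k(s)$. The only ``obstacle'' is the elementary observation that the change-of-basis matrix from $\{\phi_j\}$ to the monomial basis is invertible, making the two null-space conditions equivalent; beyond this, there is nothing to do.
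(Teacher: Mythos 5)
Your proposal is correct and follows essentially the same route as the paper: both reduce the Vandermonde null-space condition to the monomial moment conditions of Theorem~\ref{th:degree} via an invertible change of polynomial basis, and then invoke the Laurent-expansion argument to bound the degree. Your explicit remark that only the vanishing conditions (not the nonzero $\mu$-th moment) are assumed, so that only the upper bound ``at most $m-\mu$'' follows, is a point the paper's terser proof leaves implicit but is handled identically in substance.
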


\begin{proof}
Define $\mathbf{n}=F\mathbf{w}=[w_0f_0,\ldots,w_mf_m]^\top$, so that, in particular, $V_{\mu-1}^\top \mathbf{n}=V_{\mu-1}^\top F\mathbf{w}$. It is easily seen that the equality conditions in \eqref{eq:numdegree} and \eqref{eq:dendegree} correspond to null-space constraints for $\mathbf{n}$ and $\mathbf{w}$, respectively, involving Vandermonde matrices induced by the monomial basis $\phi_l(s)=s^l$, up to degrees $\mu-1$ and $\nu-1$, respectively. Since the constraints in \eqref{eq:numdegree} and \eqref{eq:dendegree} are linear in $s_k^l$ for all $k$ and $l$, the claim follows by a simple change-of-basis argument (for the polynomial basis).
\end{proof}

\section{Rational approximation with given relative degree}\label{sec:RatApproxConstrained}

The theoretical results from the previous section give us a tool for checking the relative degree of a rational function by evaluating linear combinations of its barycentric coefficients. While this can be useful on its own, we can leverage the fact that Theorem~\ref{th:degree} is a co-implication to design ways of \emph{enforcing} a given degree within rational approximation.

We restrict our attention here to AAA, although the considerations that follow can be generalized to many other rational approximation algorithms, as long as they rely on the barycentric form, cf.~Section~\ref{sec:Extensions}. The plain version of AAA is not concerned with the degree of the rational interpolant $r$ that the algorithm builds. As such, the resulting rational function $r$ will usually have exact type $(m,m)$. Indeed, round-off errors will generally make both $\sum_{k=0}^mw_kf_k$ and $\sum_{k=0}^mw_k$ nonzero, so that Theorem~\ref{th:degree} implies the largest possible type, yielding degree $0$.

Relying on the results from Section~\ref{sec:RatDegree}, we can prescribe an arbitrary type for the rational approximant by simply adding linear constraints on its barycentric coefficients \emph{during} their computation. For instance, imposing a positive degree $\delta>0$ leads to
\begin{equation}\label{eq:LSAAConstrained}
\begin{aligned}
	\min_{w_0,\ldots,w_m\in\mathbb{C}} & \sum_{j=1}^{m'}\left|\sum_{k=0}^m \frac{f(s_j')-f_k}{s_j'-s_k}w_k\right|^2,\\
	\textrm{such that} \quad & \sum_{k=0}^m|w_k|^2=1\\
	\textrm{and} \quad & \sum_{k=0}^mw_ks_k^l=0\quad\text{for } l=0,1,\ldots,\delta-1.
\end{aligned}
\end{equation}
(Enforcing a degree $\delta<0$ requires constraints involving $w_kf_k$ instead of just $w_k$.)

\begin{remark}
It is crucial to observe that the solution of \eqref{eq:LSAAConstrained} is not \emph{guaranteed} to have exact relative degree $\delta$. Indeed,
\begin{itemize}
	\item the numerator of $r$ might have degree lower than $m$ (if $\sum_{k=0}^mw_kf_k=0$);
	\item the denominator of $r$ might have degree lower than $m-\delta$ (if $\sum_{k=0}^mw_ks_k^\delta=0$).
\end{itemize}
(Similar considerations also apply for negative degrees.) However, these are not serious issues, since both above conditions are (i) extremely unlikely due to round-off and (ii) easy to spot. In practice, one might want to return an error message if either $\sum_{k=0}^mw_kf_k$ or $\sum_{k=0}^mw_ks_k^\delta$ is smaller than some tolerance, say $10^{-15}$.
\end{remark}

Problem \eqref{eq:LSAAConstrained} can easily be solved by extending ideas from \cite[Section 6.2.3]{Golub1996}. For instance, following Theorem~\ref{th:vandermonde}, one can recast \eqref{eq:LSAAConstrained} by restricting \eqref{eq:LSAA} to the null space of the (generalized) Vandermonde matrix $V_{|\delta|-1}$, as follows.

First, one computes a matrix $Q\in\mathbb{C}^{(m+1)\times(m+1-|\delta|)}$ with orthonormal columns that span the null space of $V_{|\delta|-1}$. (Note that, if the target degree $\delta$ is negative, one must left-multiply the Vandermonde matrix by the diagonal matrix $F$ defined in Theorem~\ref{th:vandermonde}.) To this aim, it suffices, e.g., to extract the rightmost block of the orthogonal matrix resulting from a full QR factorization of $V_{|\delta|-1}$. Alternatively, an Arnoldi-type procedure, see, e.g., \cite{brubeck_vandermonde_2021}, may achieve higher robustness, potentially enabling stable computations even when the defect $|\delta|$ is quite large.

Then, one solves the following size-$(m+1-|\delta|)$ problem \emph{without linear constraints}:
\begin{equation*}
\begin{aligned}
	\min_{v_0,\ldots,v_{m-|\delta|}\in\mathbb{C}} & \sum_{j=1}^{m'}\left|\sum_{k=0}^m\frac{f(s_j')-f_k}{s_j'-s_k}\sum_{l=0}^{m-|\delta|}Q_{kl}v_l\right|^2,\\
	\textrm{such that} \quad & \sum_{l=0}^{m-|\delta|}|v_k|^2=1.
\end{aligned}
\end{equation*}
A solution can be found, e.g., by computing a minimal right singular vector of the matrix $LQ$, with $L$ being the tall Loewner matrix
\begin{equation}\label{eq:loewnermatrix}
L=\begin{bmatrix}
	\frac{f(s_1')-f_0}{s_1'-s_0} & \cdots & \frac{f(s_1')-f_m}{s_1'-s_m}\\
	\vdots & \ddots & \vdots\\
	\frac{f(s_{m'}')-f_0}{s_{m'}'-s_0} & \cdots & \frac{f(s_{m'}')-f_m}{s_{m'}'-s_m}
\end{bmatrix}\in\mathbb{C}^{m'\times(m+1)}.
\end{equation}
The desired barycentric coefficients are given by a simple (orthogonal) change of basis: $w_k=\sum_{l=0}^{m-|\delta|}Q_{kl}v_l$.

A pseudo-code summarizing the procedure is provided in Algorithm~\ref{algo:aaa}. Different from the original formulation of AAA \cite{NST18}, we employ the relative error, as opposed to the absolute one, to drive the adaptive selection of the support points and to terminate the sampling loop based on the tolerance $\varepsilon_{\text{AAA}}$. Our aim in doing this is to achieve a higher robustness in the approximation of general transfer functions, no matter the underlying system. Indeed, relying on the absolute error behooves one to scale the tolerance on a case-by-case basis, as a way to account for the absolute magnitude of the signal. Another advantage of using the relative error is more specific to our framework: the relative error is more appropriate when the magnitude of the signal displays large variations, e.g., as a result of a nonzero system degree.

\begin{algorithm}[ht]
\caption{Data-driven AAA algorithm for rational interpolation (relative error version)}
\label{algo:aaa}
\begin{algorithmic}
	\REQUIRE{data points $\{(s_j',f(s_j'))\}_j$, AAA tolerance $\varepsilon_{\text{AAA}}>0$, target degree $\delta\in\mathbb{N}$}
	\STATE{Initialize $\Lambda:=\{s_j'\}_j$ and $r:\equiv\frac1{m'}\sum_{j=1}^{m'}f(s_j')$}
	\FOR{$m=0,1,...$}
	\STATE{Choose $s_m:=\argmax_{s'\in\Lambda}|f(s')-r(s')|/|f(s')|$}
	\STATE{Set $f_m:=f(s_m)$ and $\Lambda:=\Lambda\setminus\{s_m\}$}
	\IF{$|f_m-r(s_m)|/|f_m|\leq\varepsilon_{\text{AAA}}$}
	\RETURN{$r$}
	\ENDIF
	\STATE{$\Delta:=\min\{|\delta|,m\}$}
	\IF{$\Delta\neq 0$}
	\STATE{Assemble the (generalized) Vandermonde matrix $V_{\Delta-1}$ as in \eqref{eq:vander}}
	\IF{$\delta<0$}
	\STATE{Assemble $F$ as in Theorem~\ref{th:vandermonde} and update $V_{\Delta-1}:=FV_{\Delta-1}$}
	\ENDIF
	\STATE{Compute a full QR decomposition $\widetilde{Q}R:=V_{\Delta-1}$}
	\STATE{Define $Q$ as the $(m+1-\Delta)$ rightmost columns of $\widetilde{Q}$}
	\ELSE
	\STATE{Define $Q$ as the identity matrix of size $(m+1)$}
	\ENDIF
	\STATE{Assemble the Loewner matrix $L$ in \eqref{eq:loewnermatrix} and compute the SVD $U\Sigma V^H:=LQ$}
	\STATE{Extract the barycentric coefficients $(w_k)_{k=0}^m$ from the last column of $QV$}
	\STATE{Define the rational function $r$ as in \eqref{eq:bary}}
	\ENDFOR
\end{algorithmic}
\end{algorithm}

\begin{remark}\label{rem:deg}
At the $m$-th iteration, Algorithm~\ref{algo:aaa} imposes an adjusted degree equal to $\textup{sign}(\delta)\min\{|\delta|,m\}$, which, for small values of $m$, may be less (in magnitude) than the target $\delta$. This is out of necessity, since the degree of a rational function cannot be larger than its type.
\end{remark}

\subsection{Stable extrapolation}\label{subsec:asympt}
By following the strategy described in the previous section, one can build a rational approximation $r$ with an arbitrary relative degree. The surrogate model $r$ can be efficiently and stably evaluated through the barycentric form \eqref{eq:bary}. However, whenever $\rdeg(r)\neq0$, this stability is guaranteed only at frequencies $s$ located near the sample points. Indeed, if $r$ has a nonzero degree, evaluating it at large frequencies may lead to serious numerical cancellation effects. As detailed in Lemma~\ref{lem:asympt} below, the ultimate reason for such cancellation is given by identities \eqref{eq:numdegree} and \eqref{eq:dendegree} in Theorem~\ref{th:degree}, which state that certain linear combinations of the numerator or denominator coefficients equal 0. This is also showcased in Section~\ref{sec:NumericalExamples}.

This is a serious issue, considering that rational surrogates are often used for forecasting and prediction of systems' frequency responses outside the sampled frequencies. This is even more the case in our paper's setting, since we are focusing on systems whose behavior at $s\to\infty$ is potentially nonstandard, as encoded by a nonzero relative degree.

To circumvent this problem, we propose a practical strategy relying on \emph{piecewise} approximation, where the surrogate model is evaluated by using one of two formulas: one that is accurate for low frequencies, namely, the barycentric form, \eqref{eq:bary}, and another that is accurate for large frequencies, which we will call ``asymptotic form''.

The following result provides the foundations for such an asymptotic form for any rational function in barycentric form.

\begin{lemma}\label{lem:asympt}
As in Theorem~\ref{th:degree}, let the non-trivial rational function $r$ in \eqref{eq:bary} have exact type $(m-\mu,m-\nu)$. As $s\to\infty$, we have the asymptotic expansion
\begin{equation}\label{eq:leading}
	r(s)=\frac{\sum_{l=0}^\infty\left(\sum_{k=0}^mw_kf_ks_k^{\mu+l}\right)s^{-l}}{\sum_{l=0}^\infty\left(\sum_{k=0}^mw_ks_k^{\nu+l}\right)s^{-l}}s^{\rdeg(r)}.
\end{equation}
\end{lemma}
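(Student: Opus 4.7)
The plan is to apply Lemma~\ref{lem:eqasympt} separately to the barycentric numerator and the barycentric denominator of $r$ in \eqref{eq:bary}, and then to use the vanishing conditions in \eqref{eq:numdegree} and \eqref{eq:dendegree} from Theorem~\ref{th:degree} to isolate the leading asymptotic terms. Everything will be done under the standing hypothesis $|s|>\max_{k}|s_k|$, which is exactly what guarantees absolute convergence of the series produced by Lemma~\ref{lem:eqasympt}.

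First, I would write
\begin{equation*}
    \sum_{k=0}^m\frac{w_kf_k}{s-s_k}=\sum_{l=0}^\infty\left(\sum_{k=0}^mw_kf_ks_k^l\right)\frac1{s^{l+1}}
\end{equation*}
via Lemma~\ref{lem:eqasympt} with $\alpha_k:=w_kf_k$. Because $r$ has exact numerator type $m-\mu$, Theorem~\ref{th:degree} (through condition \eqref{eq:numdegree}) tells me that the coefficients of $s^{-1},\ldots,s^{-\mu}$ vanish. I would then reindex the surviving tail by the substitution $l\mapsto l+\mu$ and factor out $s^{-\mu-1}$, turning the numerator into $s^{-\mu-1}\sum_{l=0}^\infty\bigl(\sum_{k=0}^m w_kf_ks_k^{\mu+l}\bigr)s^{-l}$.

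The same argument, applied to $\sum_{k=0}^m w_k/(s-s_k)$ with $\alpha_k:=w_k$ and using \eqref{eq:dendegree}, yields $s^{-\nu-1}\sum_{l=0}^\infty\bigl(\sum_{k=0}^m w_ks_k^{\nu+l}\bigr)s^{-l}$ for the denominator. Taking the ratio gives \eqref{eq:leading}, with the prefactor $s^{-\mu-1}/s^{-\nu-1}=s^{\nu-\mu}=s^{\rdeg(r)}$ popping out naturally.

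The main (minor) obstacle is ensuring that the ratio of the two series is well defined. Absolute convergence of both series for $|s|>\max_k|s_k|$ comes for free from Lemma~\ref{lem:eqasympt}. The only extra observation needed is that, for sufficiently large $|s|$, the denominator series is nonzero; this is immediate because its leading coefficient $\sum_{k=0}^m w_ks_k^\nu$ is nonzero by \eqref{eq:dendegree}, so the series converges to a nonzero value as $s\to\infty$. Beyond this, the proof is purely bookkeeping on the two Laurent-type expansions.
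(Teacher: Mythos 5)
Your proposal is correct and follows essentially the same route as the paper's proof: expand numerator and denominator via Lemma~\ref{lem:eqasympt}, invoke the (converse direction of) Theorem~\ref{th:degree} to kill the first $\mu$ and $\nu$ coefficients, then reindex and take the ratio. Your added remark that the denominator series is eventually nonzero (since $\sum_{k=0}^m w_k s_k^\nu\neq0$) is a small extra care the paper leaves implicit.
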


\begin{proof}
By Lemma~\ref{lem:eqasympt}, we have the expansions
\begin{equation*}
	\sum_{k=0}^m\frac{w_kf_k}{s - s_k}=\sum_{l=0}^\infty\left(\sum_{k=0}^mw_kf_ks_k^l\right)\frac1{s^{l+1}}\quad\text{and}\quad\sum_{k=0}^m\frac{w_k}{s - s_k}=\sum_{l=0}^\infty\left(\sum_{k=0}^mw_ks_k^l\right)\frac1{s^{l+1}}.
\end{equation*}
By assumption, the first $\mu$ (respectively, $\nu$) terms vanish from the sum in the first (respectively, second) identity above. Recalling that $\rdeg(r)=\nu-\mu$, the claim follows by re-indexing the above series and plugging them into the barycentric form \eqref{eq:bary}.
\end{proof}

The desired asymptotic form can then be obtained by simply truncating the two series appearing in \eqref{eq:leading}, both having running index $l$, at some (user-defined) nonnegative order $N$.

The last item needed for our piecewise rational formula is the cutoff radius $R_{\text{cutoff}}$, such that the barycentric, resp.~asymptotic, form is used to evaluate $r(s)$ whenever $|s|\leq R_{\text{cutoff}}$, resp.~$|s|>R_{\text{cutoff}}$. While the value of $R_{\text{cutoff}}$ may ultimately be selected by a user of our method, we outline here a heuristic way of choosing it in practice.

For this, we compare the extrapolation error incurred with the barycentric form (including effects due to numerical cancellation) with the error that the asymptotic form incurs due to truncation of the series in Lemma~\ref{lem:asympt}. We choose $R_{\text{cutoff}}$ as the frequency $s$ where the two above-mentioned errors are equal. Of course, using the exact errors is unfeasible, as we would need to know the exact, noise-free frequency response at arbitrary frequencies. Instead, we rely on some heuristics.

First, let $T=\max_j|s_j'|$ be the largest frequency for which data is available. When using the barycentric form at large frequencies ($|s|>T$), we assume that the extrapolation error increases following the power law
\begin{equation}\label{eq:trunc}
\text{err}_{\text{bary}}(s)\simeq\epsilon\left(\frac{|s|}{T}\right)^{|\rdeg(r)|}.
\end{equation}
Above, $\epsilon$ is an upper bound for the relative approximation error over the frequency region where data is available. For instance, it may be cheaply computed as the largest approximation error over the training dataset. The exponent ``$|\rdeg(r)|$'' is consistent with Lemma~\ref{lem:asympt}, as well as with our numerical findings in the next section.

On the other hand, by direct inspection of the numerator and denominator of \eqref{eq:leading}, we can verify that the truncation error (at order $l=N$) incurred by the asymptotic form is
\begin{equation*}
\text{err}_{\text{asympt}}(s)\simeq\left(\frac{T}{|s|}\right)^{N+1}.
\end{equation*}
For the sake of simplicity, we are assuming the scaling constant in front of the $|s|^{-N-1}$ scaling to be simply $T^{N+1}$. Again, our numerical tests provide evidence supporting such a choice. It is now easy to verify that $\text{err}_{\text{bary}}$ and $\text{err}_{\text{asympt}}$ are equal at
\begin{equation}\label{eq:cutoff}
|s|=R_{\text{cutoff}}:=T\epsilon^{-1/(|\rdeg(r)|+N+1)}.
\end{equation}

\section{Data-driven identification of the relative degree}\label{sec:DegreeAutomatic}

In the previous section, we described how to build a rational approximation with a given degree from available data. However, in the task of \emph{data-driven system identification}, it is often impossible to know the ``correct'' relative degree in advance: in many settings, the relative degree of the function that generated the frequency-domain data (i.e., the \emph{relative degree} of the underlying dynamical system, cf.~Remark~\ref{rem:degreesystem}) is unknown. As such, we face the task of identifying such a degree from the frequency-domain data.

For this, we rely on \emph{model selection}: first, we approximate the data with several rational functions $\{r^{(\delta)}\}_\delta$ of different relative degrees $\delta$, computed with the strategy outlined in Section~\ref{sec:RatApproxConstrained}. Among those, we select the ``best'' approximation, in a sense to be specified. The relative degree of the selected rational approximation represents our guess for the ``exact'' relative degree of the underlying system. The only two ingredients that remain to be specified are: (i) how to select the list of potential relative degrees, and (ii) how to choose the ``best'' rational fit of the data.

A crucial ingredient for answering both questions is a metric to assess the quality of a rational function. Specifically, for our discussion, it is enough to have a criterion to determine which of \emph{two} rational approximations is better. While this will ultimately depend on the specific algorithm used to build the rational functions, cf.~Section~\ref{sec:Extensions}, we describe here a strategy that can be applied in combination with AAA.

Let $r^{(\delta)}$ and $r^{(\delta')}$ be arbitrary rational functions with relative degrees $\delta$ and $\delta'$, with $(m^{(\delta)}+1)$ and $(m^{(\delta')}+1)$ barycentric coefficients (cf.~\eqref{eq:bary}), respectively. We define their corresponding $\ell^\infty$ approximation errors $e_\infty^{(\delta)}$ and $e_\infty^{(\delta')}$ as the largest (relative) approximation errors achieved by the two functions over the training dataset.

Assuming both $r^{(\delta)}$ and $r^{(\delta')}$ are computed using AAA, so that $\max\{e_\infty^{(\delta)},e_\infty^{(\delta')}\}<\varepsilon_{\text{AAA}}$, we say that
\begin{equation}\label{eq:criterion_m}
r^{(\delta)}\text{ is better than }r^{(\delta')}\text{ iff }\begin{cases}
	m^{(\delta)}<m^{(\delta')}\quad&\text{if }m^{(\delta)}\neq m^{(\delta')},\\
	|\delta|>|\delta'|\quad&\text{if }m^{(\delta)}=m^{(\delta')}\text{ and }|\delta|\neq|\delta'|,\\
	e_\infty^{(\delta)}<e_\infty^{(\delta')}\quad&\text{if }m^{(\delta)}=m^{(\delta')}\text{ and }|\delta|=|\delta'|.
\end{cases}
\end{equation}
This can be motivated as follows:
\begin{itemize}
\item \rev{Since both candidates satisfy the AAA tolerance $\varepsilon_{\text{AAA}}$, their $\ell^\infty$ errors are typically indistinguishable. We thus compare their ``complexities'' $m$, preferring the function that attains the tolerance with fewer support points.}
\item \rev{If two candidates have the same $m$, we prefer the one with larger relative degree $|\delta|$. By Theorem~\ref{th:degree}, this corresponds to fewer effective degrees of freedom in $r^{(\delta)}$ (since a degree $\delta$ corresponds to $|\delta|$ constraints on the barycentric coefficients), and thus a simpler model in our setting.}
\item If both complexity and degree are tied, we fall back to comparing the two rational functions through their approximation error, \rev{no other metric being available for a comparison}.
\end{itemize}
\begin{remark}\label{rem:criterion}
\rev{Criterion~\eqref{eq:criterion_m} is deliberately simple: it is lightweight and easy to interpret, and it proves remarkably effective in our numerical tests. At the same time, it is only a heuristic, whose correctness can be shown only in limited cases, cf.~Lemma~\ref{lem:criterion} below. In particular, when data is noisy or when small perturbations cause AAA to terminate with different values of $m$, the criterion may be inconclusive. In such cases, supplementary checks, e.g., via cross-validation, may help identify the correct degree in a more robust way. Developing robust alternatives with solid theoretical foundations is a promising direction for future work.}
\end{remark}

We can now describe our model selection algorithm. We begin with a relative degree of $\delta=0$ and then progressively increment it. As soon as $r^{(\delta)}$ is not better (in the sense of \eqref{eq:criterion_m}) than $r^{(\delta-1)}$, we stop our search, selecting $r^{(\delta-1)}$ temporarily as the best approximation. This process is then repeated with negative relative degrees $\delta=-1,-2,\ldots$, comparing $r^{(\delta)}$ with $r^{(\delta+1)}$ and stopping as soon as the best approximation $r^{(\delta+1)}$ with non-positive relative degree is found. Finally, the best approximations with non-negative and non-positive degrees are compared, and the better one is chosen as the ultimate ``best''. Some sample pseudo-code for this strategy is provided in Algorithm~\ref{algo}.

\begin{algorithm}
\caption{Data-driven AAA-based degree-identification}
\label{algo}
\begin{algorithmic}
	\REQUIRE{data points $D:=\{(s_j',f(s_j'))\}_j$ and AAA tolerance $\varepsilon_{\text{AAA}}>0$}
	\STATE{Build baseline $r^{(0)}:=\texttt{AAA}(D,\varepsilon_{\text{AAA}},0)$}
	\FOR{$\delta=1,2,...$}
	\STATE{Build $r^{(\delta)}:=\texttt{AAA}(D,\varepsilon_{\text{AAA}},\delta)$ as in Section~\ref{sec:RatApproxConstrained}}
	\IF{$r^{(\delta-1)}$ is better than $r^{(\delta)}$, cf.~\eqref{eq:criterion_m}}
	\STATE{Set $\delta^\star:=\delta-1$ and \textbf{break} out of the for-loop}
	\ENDIF
	\ENDFOR
	\FOR{$\delta=-1,-2,...$}
	\STATE{Build $r^{(\delta)}:=\texttt{AAA}(D,\varepsilon_{\text{AAA}},\delta)$ as in Section~\ref{sec:RatApproxConstrained}}
	\IF{$r^{(\delta+1)}$ is better than $r^{(\delta)}$, cf.~\eqref{eq:criterion_m}}
	\STATE{Set $\delta_\star:=\delta+1$ and \textbf{break} out of the for-loop}
	\ENDIF
	\ENDFOR
	\IF{$r^{(\delta^\star)}$ is better than $r^{(\delta_\star)}$, cf.~\eqref{eq:criterion_m}}
	\RETURN{$\delta^\star$}
	\ELSE
	\RETURN{$\delta_\star$}
	\ENDIF
\end{algorithmic}
\end{algorithm}

\subsection{Complexity}
\rev{Each call to the AAA algorithm on a dataset of size $m'$ with $m$ support points has asymptotic cost $m'm^3$ \cite{NST18}. Since Algorithm~\ref{algo} tests all candidate degrees in the range $[\delta_\star-1,\delta^\star+1]$, the total cost is $(\delta^\star-\delta_\star+3)m'm^3$. As such, when the true relative degree is small, Algorithm~\ref{algo} costs only a modest multiple of a single (constrained or unconstrained) AAA call. For higher-degree systems, however, the extra factor may become significant.}

\rev{Several optimizations can be carried out to reduce the overall cost. For instance, one may easily run the ``$\delta>0$'' cases in parallel to the ``$\delta<0$'' ones. Moreover, computations can often be reused across consecutive degrees: by Remark~\ref{rem:deg}, the first $|\delta|$ iterations of $(D,\varepsilon_{\text{AAA}},\delta)$ are identical to those of \\ $\texttt{AAA}(D,\varepsilon_{\text{AAA}},\textup{sign}(\delta)(|\delta|+1))$, so the corresponding intermediate support points and SVDs need not be recomputed.}

\rev{Finally, we also note that, in many applications, the cost of Algorithm~\ref{algo} is negligible compared with the expense of gathering or producing the data that is given as input to the method. For instance, this is true when data is generated by high-resolution numerical simulations or measured through experiments. In such settings, the additional factor introduced by degree identification (with respect to a standard rational approximation) is unlikely to be a practical bottleneck. Empirical evidence for this claim is provided in Section~\ref{subsec:SOTA}.}

\subsection{Consistency}
\rev{Before proceeding, we present this result as a partial justification of \eqref{eq:criterion_m}.}
\begin{lemma}\label{lem:criterion}
\rev{Assume that the input data $\{(s_j',f(s_j'))\}_{j=1}^{m'}$ is generated by a function $f$ that is rational with exact type $(a,b)$, with $m'>\max\{a,b\}+1$. In exact arithmetic, setting $\varepsilon_{\text{AAA}}=0$ makes Algorithm~\ref{algo} yield the exact relative degree $(a-b)$.}
\end{lemma}
\begin{proof}
\rev{Assume without loss of generality that $a\geq b$. Consider first the trial degrees $0\leq\delta\leq a-b$, which have the correct sign and do not overestimate the exact relative degree.
	
	Since $\varepsilon_{\text{AAA}}=0$ and the distinct sample points $\{s_j'\}_j$ are more than $(a+1)$, any call to $\texttt{AAA}(D,\varepsilon_{\text{AAA}},\delta)$ yields an exact rational approximation $r^{(\delta)}=f$ of (not necessarily exact) rational type $(a,a-\delta)$. All such rational functions require $m^{(\delta)}+1=a+1$ barycentric coefficients to attain the tolerance, since (i) no rational function of numerator degree $<a$ can match $f$ at $(a+1)$ distinct points and (ii) AAA achieves exactness with the minimal number of support points thanks to the interpolation property. By the second condition in \eqref{eq:criterion_m}, Algorithm~\ref{algo} selects $r^{(a-b)}$ as ``best'' among such $(a-b+1)$ rational functions, since it has the largest degree.}

\rev{At the next iteration of Algorithm~\ref{algo}, $\delta=a-b+1$ and $\texttt{AAA}(D,\varepsilon_{\text{AAA}},a-b+1)$ also yields an exact rational approximation $r^{(a-b+1)}=f$. However, $m^{(a-b+1)}+1=a+2$ barycentric coefficients are now required to produce a rational function of sufficient (inexact) type $(a+1,b)$. As such, $r^{(a-b)}$ remains the current best by the first condition in \eqref{eq:criterion_m}, since $m^{(a-b)}<m^{(a-b+1)}$.}

\rev{A similar reasoning shows that, when trying the first negative degree $\delta=-1$, $\texttt{AAA}(D,\varepsilon_{\text{AAA}},-1)$ yields a exact approximation $r^{(-1)}=f$ of (inexact) type $(a,a+1)$, requiring one extra barycentric coefficient because of the denominator: $m^{(-1)}+1=a+2$. As such, $r^{(0)}$ is the best rational approximation with non-positive degree.}

\rev{If $a=b$, then the best non-negative-degree and non-positive-degree candidates coincide, making the claim trivially true. If $a>b$, the comparison of the current candidates $r^{(a-b)}$ and $r^{(0)}$ yields $r^{(a-b)}$ as the best rational approximation according to the second condition in \eqref{eq:criterion_m}, since $a-b>0$.}
\end{proof}
\rev{It is clear that the above result has limited relevance in realistic scenarios, since a null AAA tolerance is extremely impractical. However, Lemma~\ref{lem:criterion} shows that \eqref{eq:criterion_m} is, in some sense, \emph{mathematically consistent}, at least when data is not affected by noise.}

\section{Extensions to least-squares rational approximation}
\label{sec:Extensions}

Although we have focused on AAA throughout the paper, most of our discussion generalizes to any rational approximation strategy that employs the barycentric rational format. Some notable mentions are the vector fitting (VF) algorithm \cite{VF}, the Antoulas-Anderson (AA) approach \cite{AA86}, and the minimal rational interpolation (MRI) method \cite{P20}. As already mentioned in Section~\ref{sec:RatApprox}, VF stands out from the rest because it is not interpolatory, since its support points are chosen not as a subset of the sample points, but through an iterative procedure that aims at progressively improving their location. This lack of interpolation makes VF more robust whenever the data is affected by noise. Indeed, in such cases, the support values $f_k$ used in rational interpolation are affected by noise, generally leading to a lower overall accuracy. For this reason, it is useful to study whether a modification of our data-driven degree-identification approach in Algorithm~\ref{algo} can be developed, with VF (or other non-interpolatory rational approximation methods) replacing AAA.

We note that our theoretical results from Section~\ref{sec:RatDegree} trivially generalize to rational functions in non-interpolatory barycentric form, as we proceed to state rigorously.
\begin{corollary}\label{cor:Extensions}
Theorem~\ref{th:degree} and Lemma~\ref{lem:asympt} remain valid for functions $r$ in barycentric form \eqref{eq:baryfull}, as long as $w_kf_k$ and $w_k$ are replaced by $n_k$ and $d_k$, respectively.

The same is true for Corollary~\ref{th:vandermonde}, although the claim concerning the numerator degree must be stated without involving the matrix $F$, as ``If the vector $\mathbf{n}=[n_0,\ldots,n_m]^\top$ is nonzero and lies in the null space of $V_{\mu-1}^\top$, then the exact degree of the numerator of $r$ in \eqref{eq:bary} is at most $m-\mu$.''.
\end{corollary}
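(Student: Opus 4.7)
The plan is to observe that the proofs given earlier in Section~\ref{sec:RatDegree} are structural in nature: Lemma~\ref{lem:eqasympt} is formulated for an arbitrary sequence of coefficients $\{\alpha_k\}_{k=0}^m$, and the subsequent arguments used to prove Theorem~\ref{th:degree} and Lemma~\ref{lem:asympt} treat the numerator and denominator of $r$ separately, never invoking the relationship $n_k=w_kf_k$ that distinguishes \eqref{eq:bary} from \eqref{eq:baryfull}. Accordingly, I would essentially recycle the earlier proofs, replacing $w_k$ by $d_k$ and $w_kf_k$ by $n_k$ throughout.

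Concretely, for the extension of Theorem~\ref{th:degree}, I would revisit the Laurent expansion \eqref{eq:laurent}: the author derives it by applying Lemma~\ref{lem:eqasympt} with $\alpha_k=w_k$ and then multiplying by $\pi(s)=s^{m+1}+\mathcal{O}(s^m)$. For \eqref{eq:baryfull}, the denominator is $q(s)=\pi(s)\sum_{k=0}^m d_k/(s-s_k)$, so the same computation with $\alpha_k=d_k$ yields the analogue of \eqref{eq:laurent} with $w_k$ replaced by $d_k$, from which the degree conditions follow identically. The numerator is handled by the same argument with $\alpha_k=n_k$. For Lemma~\ref{lem:asympt}, the proof simply applies Lemma~\ref{lem:eqasympt} to each of the two barycentric sums and re-indexes; the very same manipulation with $\alpha_k=n_k$ and $\alpha_k=d_k$ produces \eqref{eq:leading} with the required substitutions.

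For the extension of Corollary~\ref{th:vandermonde}, the change-of-basis argument used to pass from monomial to generalized Vandermonde conditions is purely linear-algebraic and polynomial-basis independent, so it carries over. The only genuine adjustment is conceptual: in the interpolatory setting, the equality $\mathbf{n}=F\mathbf{w}$ allowed the numerator constraint to be stated as a null-space condition on $V_{\mu-1}^\top F$ involving $\mathbf{w}$; in the non-interpolatory setting this link is severed, and the constraint is naturally phrased directly on $\mathbf{n}$ as $V_{\mu-1}^\top \mathbf{n}=0$, exactly as stated in the corollary. The main (and only) obstacle is bookkeeping, namely verifying that no step of the original proofs secretly uses the interpolation identity $n_k=w_kf_k$; since those proofs operate on the numerator and denominator independently and only combine them at the very end to read off $\rdeg(r)=\nu-\mu$, no such dependency exists and the generalization is essentially notational.
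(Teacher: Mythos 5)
Your proposal is correct and takes essentially the same route as the paper: the paper's entire proof of this corollary is ``The claim follows trivially by inspection,'' and your argument is precisely that inspection spelled out --- the earlier proofs apply Lemma~\ref{lem:eqasympt} to generic coefficients $\alpha_k$, treat numerator and denominator separately, and never exploit the interpolation identity $n_k=w_kf_k$, so the substitution $w_kf_k\mapsto n_k$, $w_k\mapsto d_k$ (and the restatement of the numerator condition directly on $\mathbf{n}$ rather than via $F\mathbf{w}$) is purely notational.
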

\begin{proof}
The claim follows trivially by inspection.
\end{proof}

Moreover, we recall that, as discussed in Section~\ref{sec:RatApprox}, all the above-mentioned methods rely on a least-squares problem similar to \eqref{eq:LSAA}, namely, \eqref{eq:LSVF}, to characterize the approximation's barycentric coefficients. As such, enforcing a target degree for the rational approximant is just as simple as done in Section~\ref{sec:RatApproxConstrained} for AAA, by restricting the barycentric coefficients to lie in the null space of a suitable (generalized) Vandermonde matrix.

The only part of our discussion that \emph{cannot} be easily generalized to other rational approximation methods is the automatic degree identification presented in Section~\ref{sec:DegreeAutomatic}. More specifically, the main difficulty is generalizing criterion \eqref{eq:criterion_m}, which is needed to assess the quality of rational functions and, ultimately, for degree identification. Indeed, \eqref{eq:criterion_m} uses the complexity of a rational function, as encoded by the number $m$ of its barycentric coefficients, as the main indicator of ``badness'' of a rational function, neglecting the rational function's approximation error. This is done by assuming that complexity provides much more information on the quality of a rational function than the approximation error. This is true for AAA, since the method is designed to adaptively explore different complexities. However, most other rational approximation methods (including the above-mentioned ones) \emph{fix} the complexity in advance, thus making it an improper indicator of quality.

One might think that, since complexity is fixed, it might be possible to fall back on the approximation error as a way to compare rational functions. However, this is not the case, as we proceed to explain. Recall that the barycentric coefficients are found by minimizing the approximation error\footnote{Admittedly, the rational approximant minimizes the $\ell^2$ error through \eqref{eq:LSAA}, which, in theory, would allow us to use other kinds of error (e.g., the $\ell^\infty$ one) to compare rational functions without incurring the mentioned error hierarchy. Still, in our experience, imposing degree constraints usually increases \emph{both} $\ell^2$ and $\ell^\infty$ approximation errors, so that the outlined difficulties are still present.}, cf.~\eqref{eq:LSAA}. Any additional degree constraint reduces the size of the ``feasible set'' where the barycentric coefficients are sought, thus making the minimal value of the target function (i.e., the approximation error) increase. As such, using the approximation error to perform comparisons in Algorithm~\ref{algo} makes the method always select the default degree-$0$ rational approximation, it being the one that achieves the smallest error.

As such, it may appear impossible to compare rational functions of different degrees built through, e.g., VF. However, this can be achieved by leveraging a form of model selection. To this aim, we note that VF has already been coupled with model selection strategies \cite{VFMS,LFMS}, as a way to overcome one of its main limitations, namely, the need to fix the ``surrogate complexity'' $m$ in advance. Such model selection strategies generally work by using VF to independently build several rational approximations with different values of $m$, and then selecting the rational approximant characterized by the lowest approximation error.

\rev{As a prototypical example of such a model-selection technique, consider a strategy where, given a target error tolerance $\varepsilon_{\text{VF}}>0$ and a target relative degree $\delta$, one sequentially applies VF with rational surrogates of increasing complexity $m=1,2,\ldots$, stopping as soon as the uniform approximation error $\ell^\infty$ is below $\varepsilon_{\text{VF}}$. In a very practical sense, this strategy mimics AAA, cf.~Algorithm~\ref{algo:aaa}, with two main differences: (i) the selection of the support points is performed through the SK iterations rather than by selecting the sample point with the largest error \cite{VF} and (ii) the (in this case) detrimental interpolation property is weakened to a more robust least-squares approximation, cf.~\eqref{eq:LSVF}.}

\rev{Using the above strategy, one \emph{can} effectively compare rational functions based on their approximation error, since the error is not monotonic with respect to the complexity. This is in contrast with what we mention for our degree-identification routine, where the approximation error behaves monotonically with respect to (the absolute value of) the degree.} This means that, given some data, it \emph{is} possible to perform a form of VF where different complexities are adaptively explored, in a flavor similar to AAA, enabling the use of criterion \eqref{eq:criterion_m} to compare rational functions. Such an ``adaptive-complexity'' flavor of VF can then be used to replace AAA in our degree-identification strategy, namely, Algorithm~\ref{algo}. Our numerical tests, presented in the next section, show that this approach is promising when dealing with noisy data.

\rev{In practice, we envision a unified strategy where AAA serves as a fast and effective default method for clean or synthetic data, while the above-described, adaptive-complexity variant of VF acts as a more robust counterpart. When the input data is affected by an unclear amount of noise, one may either run the two approaches in parallel or use VF selectively as a fallback when AAA produces unclear results, e.g., if Algorithm~\ref{algo} returns the ``default'' value $\delta=0$. In both cases, the degree-identification criterion \eqref{eq:criterion_m} remains applicable. This being said, this method is not immune to failure: as we discuss in Section~\ref{subsec:noisy}, unreliable identification may occur with highly noisy data or with data restricted to an excessively narrow frequency window.}

\section{Numerical examples}
\label{sec:NumericalExamples}

In order to assess the effectiveness of our method, we test it on a plethora of dynamical systems. Specifically, we consider:
\begin{itemize}
\item A simple mechanical system that models the elastic interaction of point masses, as illustrated in Figure~\ref{fig_2Car_example}. More details on the model are given in Appendix~\ref{ap:cars}. Here it is enough to mention that, given a number $n\geq 2$ of masses, the model comes in two forms: a ``forward'' model with negative degree $-2n$ and an ``inverted'' model with positive degree $2n$.
\item \rev{Two} Oseen flow problem\rev{s} subject to Dirichlet-type boundary control. The map from the control to the fluid pressure exhibits the characteristics of a DAE of index 2 (degree 1). See, e.g., \cite{GosH23} for the linearized case and \cite{AltH19} for the relevant considerations of the nonlinear Navier-Stokes equations. In our numerical examples, we consider the linear setup described in \cite{GosH23}, which models how modulations in the inflow conditions affect pressure measurements in the (linearized) flow past a cylinder. \rev{We look at two \emph{Reynolds numbers} $\mathsf{Re}=20$ (which was already considered in \cite{GosH23}) and $\mathsf{Re}=30$ (which requires a finer mesh than the former case). These examples are particularly challenging because their state spaces are of sizes $\sim 5\cdot10^4$ and $\sim 6\cdot10^4$, respectively.} This makes the acquisition of data rather computationally expensive. \rev{Moreover, stronger convection effects, as encoded in larger Reynolds numbers, may correspond to an increased complexity in the transfer function due to oscillations and a generally less stable behavior of the system.}
\item Many examples from the SLICOT benchmark collection \cite{morChaV02}, whose relative degrees are either -1 or 1.
\end{itemize}
Code and data that allow reproducing the numerical tests are available at~\cite{CodZen24}.

As a synthetic way to evaluate our method, we check if it can correctly identify the degree of each of the above-mentioned systems. In all cases, our training samples are located in a very limited frequency band, which is selected case by case, ensuring that the natural frequency of each system is included. In particular, to ensure the fairness of our testing, we tried to select sampling frequencies as low as possible. In this, we are mimicking practical situations where gathering samples at high frequencies is unfeasible, e.g., due to numerical instabilities (for simulated data) or to limitations of the experimental apparatus (for measured data). Obviously, sampling at larger frequencies would have made the task of degree identification simpler, since more information on the frequency-response behavior at $s\to\infty$ would be available.

\begin{table}[htbp]
\caption{Summary of degree-identification results. Ticks and crosses mark correct and incorrect identifications of the exact relative degrees, respectively. Incorrectly identified degrees are reported in parentheses.}
\label{tab:results}
\begin{center}
	\begin{tabular}{c|c|c|c|c|c|c|}
		\multicolumn{2}{c|}{} & Exact & Sampled & \multicolumn{3}{c|}{Relative degree estimate}\\
		\cline{5-7}
		\multicolumn{2}{c|}{Test case} & relative & frequency & \multicolumn{2}{c|}{with AAA} & with VF\\
		\cline{5-7}
		\multicolumn{2}{c|}{} & degree & range & noiseless & \multicolumn{2}{c|}{noisy} \\
		\hline
		\multicolumn{2}{c|}{Forward 2-mass} & -4 & {\small $10^{-2}\div10^{0}$} & \cmark & \xmark (0) & \cmark\\
		\hline
		\multicolumn{2}{c|}{Forward 3-mass} & -6 & {\small $10^{-2}\div10^{0}$} & \cmark & \xmark (-1) & \xmark (-1)\\
		\hline
		\multicolumn{2}{c|}{Inverted 2-mass} & 4 & {\small $10^{-2}\div10^{0}$} & \cmark & \cmark & \cmark\\
		\hline
		\multicolumn{2}{c|}{Inverted 3-mass} & 6 & {\small $10^{-2}\div10^{0}$} & \xmark (0) & \xmark (2) & \xmark (1)\\
		\hline
		\multicolumn{2}{c|}{Oseen flow\rev{, $\mathsf{Re}=20$}} & 1 & {\small $\rev{10^{-1}}\div10^{1}$} & \cmark & \xmark (\rev{-1}) & \rev{\xmark (0)}\\
		\hline
		\multicolumn{2}{c|}{\rev{Oseen flow, $\mathsf{Re}=30$}} & \rev{1} & \rev{\small $10^{-1}\div10^{1}$} & \rev{\cmark} & \rev{\xmark (0)} & \rev{\cmark}\\
		\hline
		\multirow{9}{*}{\rotatebox{90}{SLICOT}} & \texttt{eady} & -1 & {\small $10^{0}\div10^{3}$} & \cmark & \xmark (-2) & \cmark\\
		\cline{2-7}
		& \texttt{peec} & -1 & {\small $10^{3}\div10^{4}$} & \cmark & \xmark (0) & \xmark (-3)\\
		\cline{2-7}
		& \texttt{fom} & -1 & {\small $10^{1}\div10^{2}$} & \cmark & \xmark (0) & \cmark\\
		\cline{2-7}
		& \texttt{pde} & -1 & {\small $10^{1}\div10^{4}$} & \cmark & \xmark (0) & \cmark\\
		\cline{2-7}
		& \texttt{heat-disc} & -1 & {\small $10^{0}\div10^{2}$} & \cmark & \cmark & \cmark\\
		\cline{2-7}
		& \texttt{beam} & -1 & {\small $10^{-2}\div10^{2}$} & \cmark & \cmark & \cmark\\
		\cline{2-7}
		& \texttt{mna\_1}, entry $(1,1)$ & 1 & {\small $10^{11}\div10^{12}$} & \cmark & \cmark & \cmark\\
		\cline{2-7}
		& \texttt{mna\_1}, entry $(1,3)$ & -1 & {\small $10^{11}\div10^{12}$} & \cmark & \xmark (0) & \cmark\\
		\cline{2-7}
		& \texttt{mna\_1}, entry $(2,3)$ & 1 & {\small $10^{11}\div10^{12}$} & \xmark (-2) & \xmark (0) & \xmark (-1)\\
		\hline
	\end{tabular}
\end{center}
\end{table}

The results are shown in Table~\ref{tab:results} and are discussed in detail in the upcoming sections. To assess numerical robustness, we use both noiseless and noisy data. The noise is manufactured in a multiplicative way: the $j$-th data value is perturbed as
\begin{equation*}
f_{\textup{noisy}}(s_j')=f(s_j')\Big(1+10^{-6}\xi_j\Big),
\end{equation*}
with $\xi_1,\xi_2,\ldots$ being independent samples from a uniform random variable taking values in $[-1,1]$.

\subsection{Results with AAA}

First, we apply our method exactly as presented in Algorithm~\ref{algo}, relying on AAA for rational approximation. We set the AAA tolerance to $10^{-6}$ and $10^{-4}$ in the noiseless and noisy cases, respectively. The results are reported in the third-to-last and second-to-last columns of Table~\ref{tab:results}. We see that our method is generally reliable in the noiseless case, with mostly correct degree identifications across the board. The two failures are discussed and explained in detail below.

When the data is noisy, the algorithm fails in more than two-thirds of the tests. This is due to obvious limitations in applying AAA, an interpolatory algorithm, to noisy data. In this sense, we look at the noisy tests' results as a way to verify that our method \emph{can} work in unfavorable environments, rather than breaking down completely. Indeed, in almost all cases of degree mislabeling due to noise, the degree is underestimated as opposed to overestimated (in magnitude). In fact, our algorithm estimates the degree to be its default zero value in most misidentified cases.

\subsubsection{Forward 3-mass system}
We now proceed to discuss some selected results in further detail, starting from the noiseless forward 3-mass system. The quality of the approximation can be observed in Figure~\ref{fig:numresults-fcars} (left). Our adaptive degree-identification approach is clearly superior to ``standard'' AAA, with the latter's approximate frequency response displaying a characteristic saturation at large frequencies, due to its ``default'' zero relative degree. Note, in particular, how our approach detects the correct relative degree from data on a frequency range where, seemingly, no qualitative indication of the correct degree-$(-6)$ scaling is present: the training data follows much more closely an inverse-quadratic scaling, cf.~the reference curve in the top-left plot.

\begin{figure}[htbp]
\centering
\includegraphics[scale=.85]{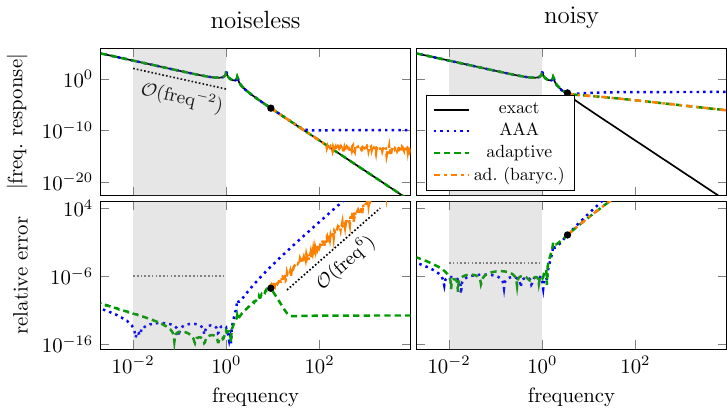}
\caption{Numerical results for the forward transfer function of the $3$-mass chain. The gray band is the sampled frequency range. Black dots denote the point $R_{\text{cutoff}}$, where the barycentric rational form gives way to the asymptotic rational form. The orange dash-dotted curves pertain to the adaptive method's results using purely the barycentric form.}
\label{fig:numresults-fcars}
\end{figure}

It is interesting to note that, although our approach builds a rational function with the exact relative degree $-6$ (automatically identified by our algorithm), numerical cancellation in the barycentric form leads to a saturation effect similar to AAA's. This is evidenced by a drastically increasing approximation error for frequencies larger than $10^{1}$, in agreement with \eqref{eq:trunc}. However, the asymptotic form from Lemma~\ref{lem:asympt}, which we use (with series truncated at $N+1=11$ terms) for frequencies larger than $R_{\text{cutoff}}$ (computed as in \eqref{eq:cutoff}), manages to recover a stable evaluation even at large frequencies. Still concerning the use of the asymptotic form, we note how the cutoff radius $R_{\text{cutoff}}$ is chosen well, ensuring that, at $|s|=R_{\text{cutoff}}$, cancellation errors in the barycentric form are balanced with truncation errors in the asymptotic form.

The results for the noisy case, in Figure~\ref{fig:numresults-fcars} (right), are worse, since neither approach is able to correctly identify the correct relative degree of the system. Ultimately, this is due to interpolating the noise within the sampled frequency range, preventing good extrapolation properties. Specifically for our approach, a correct automatic identification of the degree is hindered by the fact that the noise is spatially uncorrelated, hence difficult to fit with rational functions.

\subsubsection{Inverted 3-mass system}\label{subsec:inv}

We now move to our first failed test, involving the inverted 3-mass system. Since our automatic degree identification returns the default zero degree, the corresponding rational approximation coincides with that obtained by AAA. As shown in Figure~\ref{fig:numresults-icars} (left), the approximation quality quickly degrades as the frequency increases, as a consequence of the badly identified degree.

\begin{figure}[htbp]
\centering
\includegraphics[scale=.85]{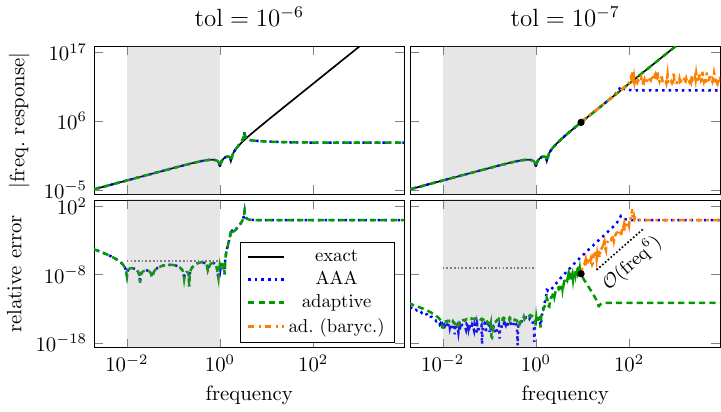}
\caption{Numerical results for the inverted transfer function of the $3$-mass chain. The gray band is the sampled frequency range. Black dots denote the point $R_{\text{cutoff}}$, where the barycentric rational form gives way to the asymptotic rational form. The orange dash-dotted curves pertain to the adaptive method's results using purely the barycentric form.}
\label{fig:numresults-icars}
\end{figure}

A closer inspection allowed us to identify why the degree was misidentified. Running ``standard'' AAA (relative degree $\delta=0$) with a tolerance of $10^{-6}$ leads to the selection of 6 support points, with a rational approximant of type $(5,5)$. However, this is barely enough to attain the tolerance, with the largest approximation error over the sample points being $6.3\cdot10^{-7}$. On the other hand, building a rational approximation of type $(5,4)$ (relative degree $\delta=-1$) attains an error above the tolerance, which makes our algorithm reject positive degrees.

To recover a correct behavior, it was enough to slightly decrease the AAA tolerance to $10^{-7}$. We display the corresponding results in Figure~\ref{fig:numresults-icars} (right). This empirically suggests that, with the aim of further increasing the robustness of our approach, it might prove useful to use different AAA tolerances at different stages of the algorithm: a base tolerance for the call to ``standard'' AAA (relative degree $\delta=0$) and a slightly larger tolerance for all other calls to AAA with nonzero degree constraints. Note, however, that this modification could make our method more prone to overestimating the degree (in absolute terms).

\subsubsection{SLICOT MNA system}\label{subsec:mna}

Finally, we look at our other failed test, involving a modified-nodal-analysis system from the SLICOT library \cite{morChaV02}. First, as indicated in Table~\ref{tab:results}, the relative degrees of the $(1,1)$ and $(1,3)$ entries of the transfer-function matrix are correctly identified. This is also the case for most other entries of the transfer function, which we do not include in our showcase for conciseness.

However, as we see in Figure~\ref{fig:numresults-mna1}, this has no implications on the accuracy of the rational approximation outside the sampled frequency range\footnote{The error is sometimes above the prescribed tolerance even within the sampling range, as a consequence of the fact that we are using a finer frequency grid for making these plots than for training the surrogate.}. We can observe that large errors are incurred in the secondary peaks of the transfer function. Notably, the asymptotic scaling for large frequencies is well identified, despite errors in the estimation of the \emph{constant in front of the scaling}. These results are quite remarkable, since they show how our method is able to decouple \emph{degree identification} from \emph{transfer-function approximation at large frequencies}.

\begin{figure}[htbp]
\centering
\includegraphics[scale=.85]{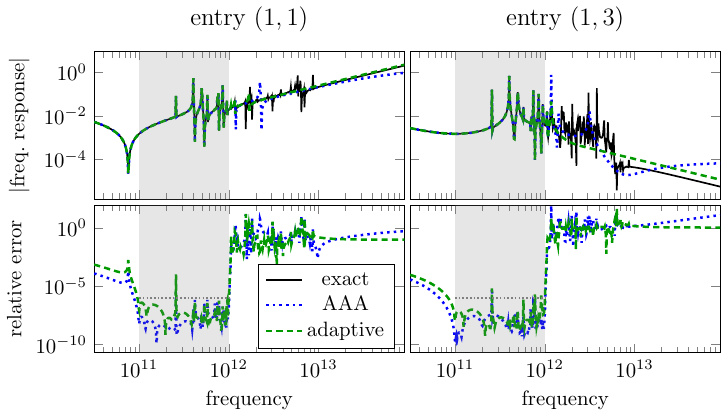}
\caption{Numerical results for two entries of the \texttt{MNA\_1} transfer-function matrix. The gray band is the sampled frequency range.}
\label{fig:numresults-mna1}
\end{figure}

However, there are limitations, as evidenced by the misidentification of the degree of the $(2,3)$ entry of the transfer-function matrix. The corresponding (lack of) approximation accuracy can be observed in Figure~\ref{fig:numresults-mna2} (left). This is obviously an example of the above-mentioned decoupling between degree identification and high-frequency transfer-function approximation going wrong. Otherwise stated, the information on a very limited frequency range, which misses some dominant peaks of the transfer function, was not enough to draw correct conclusions on the asymptotic scaling of the transfer function. The simplest way to fix this, recovering a correct degree identification, is to increase the largest sampled frequency. We show in Figure~\ref{fig:numresults-mna2} (right) the results for a slightly larger sampling window (the largest sampled frequency is $10^{12.25}$ instead of $10^{12}$), which, although not so qualitatively different from the results plotted in Figure~\ref{fig:numresults-mna2} (left), display a correct degree identification.

\begin{figure}[htbp]
\centering
\includegraphics[scale=.85]{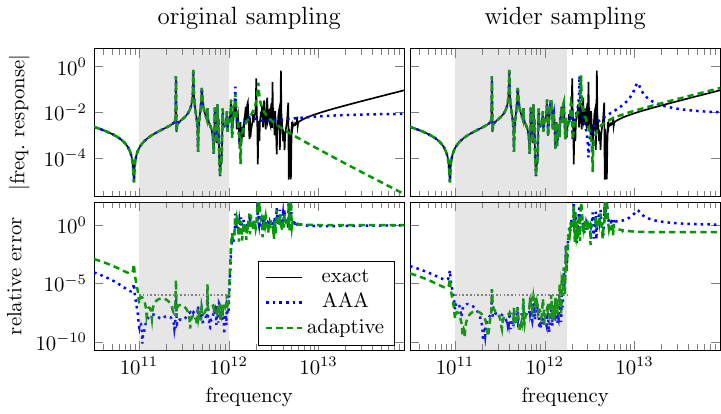}
\caption{Numerical results for the entry $(2,3)$ of the \texttt{MNA\_1} transfer-function matrix. The gray band is the sampled frequency range.}
\label{fig:numresults-mna2}
\end{figure}

\subsection{Results with VF}\label{subsec:noisy}

In Section~\ref{sec:Extensions}, we discussed how to apply our degree-identification algorithm with least-squares rational approximation, as opposed to interpolation. Non-interpolatory least-squares rational approximation is definitely a more valid choice in the noisy setting. As a way to assess this empirically, we repeat our numerical experiments, replacing AAA with a simplified version of VF.

To build a rational approximation, we fit the data (in a least-squares sense) using VF, although, for the sake of simplicity and to streamline the results, we run a single SK iteration. Given data $\{(s_j',f(s_j'))\}_{j=1}^{m'}$ from the frequency range $t=|s_1'|\leq\ldots\leq|s_{m'}'|=T$, we choose geometrically spaced support points \\ $\{0.9s_1'(1.2T/t)^{k/m}\}_{k=0}^m$. (The scaling factors $0.9$ and $1.2$ have been added to ensure that the support points are disjoint from the sample points.) As discussed in Section~\ref{sec:Extensions}, we perform model selection by progressively increasing the number of support points until the approximation error is uniformly below a $10^{-4}$ tolerance.

The results are reported in the last column of Table~\ref{tab:results}. We can observe a much higher success rate with respect to AAA, with correct identification of the relative degree in two-thirds of the test cases. Although we do not present detailed results here, we note that there is numerical evidence to suggest that the misidentifications of the degree happen mainly because of sampling regions that are too narrow, similarly to what was presented in Section~\ref{subsec:mna} concerning AAA. Specifically, we have found that enlarging the sampling regions leads to an increased prediction accuracy, as is reasonable to expect.

Ultimately, since our degree-identification method is effectively a form of extrapolation, it is impossible to achieve complete reliability in the case of noisy data. Indeed, as our results show, noise might lead the identification procedure astray, especially if the data comes from a very narrow and low-frequency window. At the same time, we note that improved robustness may be achieved through some form of cross-validation or adaptive resampling. A systematic investigation of these strategies lies beyond the scope of this paper, but represents a natural direction for future research.

\subsection{Comparison with state of the art}\label{subsec:SOTA}
As a final numerical experiment, we test the computational efficiency of our techniques, comparing their performance to that of two state-of-the-art methods for surrogate modeling of improper transfer functions, namely the ``improper Loewner framework'' from \cite{morAntGH20} and the ``polynomial AA'' approach from \cite{GosH23}. We run all methods on the Oseen test case with $\mathsf{Re}=20$, using the same $100$ input samples, at frequencies logarithmically spaced between $10^{-1}$ and $10^1$.

It is crucial to highlight that both considered state-of-the-art methods only perform transfer function approximation (surrogate modeling), without inferring the degree of the transfer function itself. Rather, they assume a known, fixed relative degree, notably, equal to $1$. Since the state-of-the-art methods do not estimate the relative degree, we compare our full pipeline (including degree identification) against their surrogate-building step alone.

\begin{table}[htbp]
\caption{Summary of runtime comparison (averaged over three runs) for the Oseen test case ($\mathsf{Re}=20$) using different methods. The tests were run in MATLAB\textsuperscript{\textregistered} R2024b on a workstation with an Intel\textsuperscript{\textregistered} Core\textsuperscript{\texttrademark} Ultra 7 processor 265. All times are in seconds.}
\label{tab:times}
\begin{center}
	\begin{tabular}{c|c|c|c|c|}
		\multirow{2}{*}{Method} & \multicolumn{2}{c|}{Sampling time} & Postprocessing\\
		\cline{2-3}
		& base (low-freq.) & extra (hi-freq.) & time\\
		\hline
		Improper Loewner \cite{morAntGH20} & \multirow{4}{*}{\texttt{9.42e+1}} & \texttt{7.78e+0} & \texttt{6.60e-3}\\
		\cline{1-1}\cline{3-4}
		Polynomial AA \cite{GosH23} & & - & \texttt{2.47e-3}\\
		\cline{1-1}\cline{3-4}
		Auto-degree AAA [this paper] & & - & \texttt{8.77e-3}\\
		\cline{1-1}\cline{3-4}
		Auto-degree VF [this paper] & & - & \texttt{1.62e-2}\\
		\hline
	\end{tabular}
\end{center}
\end{table}

Although this makes the comparison skewed against our methods, the results, in Table~\ref{tab:times}, show that the runtimes of all algorithms are comparable. Specifically, the sampling costs are larger than the postprocessing rational-approximation costs by about $4$ orders of magnitude. This should give a representative view of the typical runtime split when our method is applied to large-scale applications, where sampling is expensive due to the system's size.

We also note that the ``improper Loewner'' method \cite{morAntGH20} requires some extra samples to approximate the improper (polynomial) part of the transfer function. In our tests, we augment the base (low-frequency) sample set with an additional $10\%$ high-frequency samples. This leads to a higher sampling cost compared to other methods.

Moving now to comparing postprocessing costs, ``polynomial AA'' is the fastest method, running about $3.5$ times quicker than our AAA-based method. This is ultimately because our method must build four different AAA surrogates within its iterative degree-identification routine, cf.~Algorithm~\ref{algo}, while no degree identification is performed in ``polynomial AA''. On the other hand, we see that incorporating VF in our method roughly doubles the postprocessing runtime, compared to the more efficient AAA algorithm.

These results indicate that, even when degree identification is included, the total cost of our pipeline is comparable to existing specialized methods, while providing strictly more information (the relative degree).

\section{Conclusions}\label{sec:conclusions}

We have presented several theoretical results concerning the barycentric rational form, primarily focusing on the concept of ``relative degree''. Based on such results, we have described two novel numerical algorithms for data-driven system identification.

The first one, Algorithm~\ref{algo:aaa}, performs rational approximation while enforcing a prescribed relative degree. Notably, in contrast to other methods, we are able to achieve non-trivial relative degrees without making any modifications to the barycentric rational form, no matter how large the enforced degree.

The second one, Algorithm~\ref{algo}, performs a similar task, but it is the algorithm itself that supplies a guess for the target system's relative degree. This is achieved through model selection by leveraging the trade-off between a rational function's complexity and degree. In contrast to competitor strategies, this degree identification is carried out without the need for high-frequency samples. This latter method has the great advantage of not requiring the user to provide the target system's relative degree as input (or a guess thereof). As such, our method can enable degree identifications in settings where no effective alternatives exist, e.g., because an accurate approximation of the transfer function at large frequencies is unfeasible due to the associated sampling/experimental costs.

Our methods' capabilities, in terms of both degree identification and rational approximation, have been showcased through extensive numerical testing. In our tests, we were not able to achieve perfect degree-identification accuracy, especially for noisy data. This is to be expected, since the effectiveness of data-driven methods is ultimately dependent on the quality of the available data. Still, we believe that our results show the great promise of our method. We envision further testing on more challenging engineering applications, so as to better identify the strengths and weaknesses of our approach. A runtime comparison of our methods with some available alternatives showed that, although some additional costs have to be expected due to our iterative degree-identification strategy, the overall runtime of the technique is essentially the same as available surrogate modeling techniques that do \emph{not} incorporate any degree identification.

At the same time, we believe that it might still be possible to increase the robustness of our method (e.g., to noise in the data) through theoretical means. To this aim, one would have to devise better (heuristic or not) criteria for comparing rational functions of different degrees, compared to those discussed in Sections~\ref{sec:DegreeAutomatic} and \ref{sec:Extensions}. At the moment, this remains a future research direction.

\appendix

\section{Mass-train example}\label{ap:cars}
\begin{figure}[tbp]
\centering
\includegraphics{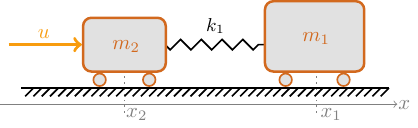}
\caption{Illustration of the sample mechanical system with $n=2$ masses.}
\label{fig_2Car_example}
\end{figure}
Consider two masses that are connected via a spring as illustrated in Figure~\ref{fig_2Car_example}. This mechanical system is modeled with two degrees of freedom, namely, the positions $x_1$, $x_2$. A force of intensity $u$ acts on the second mass, and the position $x_1$ serves as the output $y$. The model is given as
\begin{equation*}
\begin{cases}
	m_1 \ddot x_1 = - k (x_1 - x_2 - d),\\
	m_2 \ddot x_2 = k (x_1 - x_2 - d) + u,\\
	y = x_1.
\end{cases}
\end{equation*}
Friction is neglected. On top of the masses $m_1$ and $m_2$, two parameters are present: the spring constant $k>0$ and the spring length $d>0$. While the forward problem $u\to y$ is an integrator chain, the inverted dynamics $y\to u$ form a DAE of index 5 \cite{AltH17}. 

In our simulations, we consider the frequency-domain formulation of the generalization of this problem to a chain of $n\geq 2$ masses. We first look at the system
\begin{equation*}
\begin{cases}
	M\ddot{\mathbf{x}} = A\mathbf{x} + \mathbf{f} + Bu, \\
	y = C\mathbf{x}.
\end{cases}
\end{equation*}
Above, $\mathbf{x}=[x_1,\ldots,x_n]^\top$ is the vector of the masses' coordinates, the mass values are merged into $M=\textup{diag}(m_1,\ldots,m_n)\in\mathbb R^{n\times n}$, while
\begin{equation*}
A = {
	\begin{bmatrix}
		-k_1 & k_1 & \\
		k_1 & -k_1-k_2 & k_2 \\
		& \ddots & \ddots & \ddots \\
		&				&k_{n-2} &- k_{n-2} -k_{n-1}&k_{n-1} \\
		&				& &  k_{n-1}&-k_{n-1} 
\end{bmatrix}},\;\mathbf{f} = {
	\begin{bmatrix} 
		k_1d_1 \\ -k_1d_1 +k_2d_2 \\ \vdots \\ -k_{n-2}d_{n-2} +k_{n-1}d_{n-1} \\ -k_{n-1}d_{n-1}
\end{bmatrix}},
\end{equation*}
$B=[0,0,\ldots,0,1]^\top\in\mathbb R^{n\times 1}$, and $C=[1,0,\ldots,0,0]\in \mathbb R^{1\times n}$. The spring constants $k_1,\dots,k_{n-1}>0$ and spring lengths $d_1,\dots,d_{n-1}>0$ are arbitrary, although, in our numerical tests, we set all constants to $1$: $m_j=k_j=d_j=1$ for all $j$.

In the frequency domain, the $u\to y$ map is obtained as an affine linear (with respect to $u$) map
\begin{equation}\label{eq:carforw}
y(s) = C(s^2M-A)^{-1}\left(Bu(s) + \frac 1s \mathbf{f}\right)
\end{equation}
and the inverted map $y\to u$ as
\begin{equation}\label{eq:carinv}
u(s) = \left(C(s^2M-A)^{-1}B\right)^{-1}\left(y(s) - C(s^2M-A)^{-1}\frac 1s
\mathbf{f}\right).
\end{equation}
As the affine part can be estimated separately, in the numerical tests, we will only consider the linear part of the transfer function. Otherwise stated, we set $d_1=\ldots=d_{n-1}=0$, so that $\mathbf{f}=\mathbf{0}$ in \eqref{eq:carforw} and \eqref{eq:carinv}.

\section*{Declarations}
\paragraph{Data availability.} 
The MATLAB code and data files for reproducing the presented experiments are available publicly at~\cite{CodZen24} under the MIT license.

\paragraph{Conflict of interest.}
The authors declare that they have no conflict of interest.

\bibliographystyle{plainurl}
\bibliography{mybib}

\end{document}